\newtheorem{theorem}{Theorem}[section]
\newtheorem{proposition}[theorem]{Proposition}
\newtheorem{lemma}[theorem]{Lemma}
\newtheorem{corollary}[theorem]{Corollary}
\theoremstyle{definition}
\newtheorem{definition}[theorem]{Definition}
\theoremstyle{remark}
\numberwithin{equation}{section}
\numberwithin{theorem}{section}
\def\N{\mathbb{N}}
\def\R{\mathbb{R}}
\def\C{\mathbb{C}}
\def\a{\alpha}
\def\g{\gamma}
\def\s{\sigma}
\def\e{\varepsilon}
\def\lan{\langle}
\def\ran{\rangle}
\def\ti{\times}
\def\diag{\mathrm{diag}}
\def\proj{{\mathbb{P}}}
\def\Sph{\mathbb{S}}
\def\im{\mathrm{im}}
\def\Grass{\mathbb{G}}
\def\ug{\mathbb{U}}
\def\z{\zeta} 
\newcommand{\ang}{\sphericalangle}
\newcommand{\kercond}{\mathrm{kercond}} 
\newcommand{\imcond}{\mathrm{imcond}}
\newcommand{\cM}{\mathcal{M}} 
\newcommand{\cN}{\mathcal{N}}
\newcommand{\Hy}{\mathcal{H}}   
\newcommand{\Hyl}{\mathcal{H}_{\mathrm{lin}}}    
\newcommand{\vol}{\mathrm{vol}}    
\newcommand{\Prob}{\mathrm{Prob}}
\newcommand{\E}{\mathbb{E}}  
\newcommand{\Reg}{\mathrm{Reg}}  
\newcommand{\Sing}{\mathrm{Sing}}
\newcommand{\spann}{\mathrm{span}}
\newcommand{\Irrel}{\mathrm{Irrel}}
\newcommand{\suf}{\mathrm{sf}}
\newcommand{\rdeg}{\mathrm{rdeg}}
\newcommand{\codim}{\mathrm{codim}}
\title[Condition of intersecting a projective variety with a linear subspace]{Condition of intersecting a projective variety\\ 
with a varying linear subspace} 
\author{Peter B\"urgisser}
\thanks{
Institute of Mathematics, Technische Universit\"at Berlin, 
pbuerg@math.tu-berlin.de.
Partially supported by DFG grant BU 1371/2-2.}
\date{\today}
\keywords{numerical algebraic geometry, B\'ezout's theorem, condition number, hypersurfaces in Grassmannians}
\subjclass[2000]{65H10, 14Q20, 65F22, 65F35,  28A75} 
\begin{document}
\maketitle

\begin{abstract}
The numerical condition of the problem of intersecting 
a fixed $m$-dimensional irreducible complex projective variety $Z\subseteq\proj^n$
with a varying linear subspace $L\subseteq\proj^n$ of complementary 
dimension $s=n-m$ is studied. 
We define the intersection condition number~$\kappa_Z(L,z)$ 
at a smooth intersection point $z\in Z\cap L$ as the norm of 
the derivative of the locally defined solution map 
$\Grass(s,\proj^n)\to\proj^n,\, L\mapsto z$.
We show that $\kappa_Z(L,z) = 1/\sin\a$, 
where $\a$~is the minimum angle between the tangent spaces $T_zZ$ and $T_zL$.
From this, we derive a {\em condition number theorem} that 
expresses $1/\kappa_Z(L,z)$ as 
the distance of $L$ 
to the local Schubert variety, which consists of the linear subspaces 
having an ill-posed intersection with~$Z$ at~$z$. 
A probabilistic analysis of the maximum condition number 
$\kappa_Z(L) := \max \kappa_Z(L,z_i)$, taken over all intersection points 
$z_i\in Z\cap L$, leads to the study of the volume of tubes around 
the Hurwitz hypersurface $\Sigma(Z)$. 
As a first step towards this, 
we express the volume of $\Sigma(Z)$ in terms of its degree.
\end{abstract}

\section{Introduction}

Let $Z\subseteq\proj^n$ be a fixed $m$-dimensional irreducible complex projective variety with $0<m<n$, and 
$L\subseteq\proj^n$ be a linear subspace of complementary dimension $s=n-m$.
B\'ezout's theorem tells us that if $L$ is in sufficiently general position, then the intersection $Z\cap L$ 
consists exactly of degree of~$Z$ many points. 
In fact, in numerical algebraic geometry~\cite[\S13.3]{sommese-wampler:05}, 
the variety~$Z$ is described by a {\em witness point set}, which is nothing but 
$Z\cap L$ for a ``generic'' subspace $L$. 
Along with the witness point set, 
one needs a routine to keep track of $Z\cap L$ 
when $L$ moves (e.g., by describing a loop in the Grassmann manifold). 
This routine is usually implemented by a Newton homotopy continuation.

Numerical computations are affected by errors (e.g., due to round-off), so it is important to understand 
to what extent the witness point set $Z\cap L$ changes when $L$ is perturbed. 
In this paper, we achieve this by 
introducing and analyzing the notion of an {\em intersection condition number}.
In doing so, we follow the general geometric framework as introduced by 
Rice~\cite{Rice} and Shub and Smale~\cite{Bez4}; see also \cite[Chap.~14]{condition}. 
We see our work as a step towards a better theoretical understanding of the algorithms 
in numerical algebraic geometry. 

\subsection{Kernel intersection condition number}

Suppose that $L\subseteq\proj^n$ corresponds to the 
kernel $\widehat{L}$ of a full rank matrix $A\in\C^{m\times (n+1)}$,  
so that $\dim\hat{L}=s+1$. 
If $L$ has a transversal intersection with $Z$ at the smooth point $z$ of $Z$, 
then the implicit function theorem shows that there is a holomorphic map~$G$ 
sending matrices~$\tilde{A}$ in an open neighborhood~$U$ of $A$ to points $G(\tilde{A})$ in 
an open neighborhood~$V$ of $z$ in $Z$ such that 
$\proj(\ker\tilde{A}) \cap V = \{ G(\tilde{A})\}$ for all $\tilde{A}\in U$. 
We shall call~$G$ the {\em solution map} at $(A,z)$. 

The space $\proj^n$ is a compact complex manifold with a unitary invariant hermitian 
metric on its tangent spaces. We take 
$T_{z} \proj^n :=\{ \dot{z}\in \C^{n+1}\mid \lan \dot{z}, z\ran = 0\}$
as a model for the tangent space at the point represented by $z\in\C^{n+1}\setminus\{0\}$ 
and use the hermitian inner product 
$\lan \dot{u}, \dot{v}\ran_z := \|z\|^{-2}\lan \dot{u},\dot{v}\ran$, where
$\lan \dot{u},\dot{v}\ran :=\sum_j \dot{u}_j\bar{\dot{v}}_j$ for $\dot{u},\dot{v}\in\C^{n+1}$ 
denotes the standard hermitian inner product on $\C^{n+1}$
(Fubini-Study metric; see \cite[\S14.2]{condition}).

Consider now the derivative 
$D_A G\colon T_A \C^{m\times (n+1)} \to T_z \proj^n$ 
of the solution map $G$ at $A$ and its operator norm 
$$
 \| D_A G \| := \sup_{\|\dot{A}\|_F =1} \| D_AG (\dot{A}) \| ,
$$
defined with respect to the Frobenius norm 
$\|\dot{A}\|_F := (\sum_{ij} |\dot{a}_{ij}|^2 )^{1/2}$
on $\C^{m\times (n+1)}$.

\begin{definition}\label{def:condA}
Let $Z\subseteq\proj^n(\C)$ be a fixed $m$-dimensional irreducible projective variety. 
Let $A\in\C^{m\times (n+1)}$ be of full rank and $z\in Z$ such that $Az=0$. 
The {\em kernel intersection condition number} of~$A$ at $z$
(with respect to the variety $Z$) is defined as 
$\kercond_Z(A,z) := \|A\|  \cdot \|D_A G\|$ 
if $z$ is a smooth point of~$Z$
and if $\proj(\ker A)$ has a transversal intersection with $Z$
at~$z$. 
(Here $\|A\|$ denotes the spectral norm of~$A$.) 
We set $\kercond_Z(A,z) := \infty$ in all the other cases. 
\end{definition} 

Note that by this definition, the condition number is scale invariant: 
$\kercond_Z(t A,z) = \kercond_Z(A,z)$ for $t\in\C^\times$. 

\subsection{Intrinsic intersection condition number}
\label{se:intrinsic}

In order to understand the kernel intersection condition number, 
it is useful to work with a more intrinsic notion of condition, 
following ideas in~\cite{amel-pbuerg:11a}. 
The complex Grassmann manifold
$\Grass:=\Grass(s,\proj^n)$ is defined as the set of $s$-dimensional projective linear 
subspaces $L$ of $\proj^n$. 
It is a compact complex manifold with a unitary invariant hermitian
metric on its tangent spaces, 
see Section~\ref{se:Grass-conv}. 
Clearly, we may identify $\Grass$ with the set $\Grass(s+1,\C^{n+1})$ 
of linear subspaces $\widehat{L}$ of $\C^{n+1}$ having the dimension $s+1$. 

Assume that $z\in \Reg(Z)$ is a smooth point of the 
$m$-dimensional projective variety $Z\subseteq\proj^n$.  
Again, we put $s=n-m$. 
Moreover, assume that $L\in\Grass$ intersects~$Z$  
transversally at~$z$, 
which means that $T_zZ + T_zL = T_z\proj^n$. 
The implicit function theorem implies that there is a holomorphic map
\begin{equation}\label{eq:gamma}
 \g\colon\Grass\supseteq U \to V\subseteq Z
\end{equation}
sending spaces~$\tilde{L}$ in an open neighborhood~$U$ of $L$ 
to points~$\g(\tilde{L})$ in an open neighborhood~$V$ of $z$ in $Z$ 
such that 
$\tilde{L}\cap V = \{ \g(\tilde{L})\}$ for all $\tilde{L}\in U$. 
We shall call $\g$ the {\em solution map} at $(L,z)$. 
Consider now the derivative 
$D_L \g\colon T_L \Grass \to T_z \proj^n$ 
of the map~$\g$ at $L$ and its spectral norm 
$$
 \| D_L \g\| := \sup_{\|\dot{L}\| =1} \| D_L \g (\dot{L}) \| ,
$$
with respect to the hermitian norm on $T_L\Grass$ 
and the Fubini-Study norm on $T_z \proj^n$. 

\begin{definition}\label{def:condG}
Let $Z\subseteq\proj^n(\C)$ be a fixed $m$-dimensional irreducible
projective variety and $s:=n-m$.
Let $L\in \Grass(s,\proj^n)$ and $z\in Z$ be such that $z\in L$. 
The {\em intersection condition number} of $L$ at $z$
(with respect to the variety $Z$) is defined as 
$\kappa_Z(L,z) := \|D_L \g\|$ 
if $z$ is a smooth point of~$Z$ and if $L$ has a transversal intersection with $Z$ at~$z$. 
We set $\kappa_Z(L,z) := \infty$ in all the other cases. 
\end{definition} 

The following result  is inspired by~\cite{amel-pbuerg:11a} and reveals the connection 
between the kernel intersection condition number $\kercond_Z(A,z)$, when $L$ is given as the kernel of a matrix $A$, 
and the intrinsic condition number~$\kappa_Z(L,z)$. 
We recall that the condition number of the (full rank) matrix $A\in\C^{m\times (n+1)}$ 
is defined as $\kappa(A) := \|A\|\cdot \|A^\dagger\|$, where 
$A^\dagger$ denotes the Moore-Penrose inverse of~$A$ 
and we use spectral norms. 

\begin{theorem}\label{pro:kckA}
We have 
$\kappa_Z(L,z) \le \kercond_Z(A,z) \le \kappa(A)\cdot\kappa_Z(L,z)$
if $L\in\Grass(s,\proj^n)$ is given as the kernel of the full rank
matrix $A\in\C^{m\times (n+1)}$ and $z\in L$.
\end{theorem}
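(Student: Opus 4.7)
The plan is to factor the matrix-solution map $G$ through the intrinsic solution map $\g$, and then track operator norms through the chain rule. Let $\pi\colon\{A\in\C^{m\times(n+1)} : \text{full rank}\}\to\Grass$ be the map sending $\tilde{A}$ to $\proj(\ker\tilde{A})$. Locally near $A$, the condition that $\proj(\ker\tilde{A})\cap V=\{G(\tilde{A})\}$ depends only on $\ker\tilde{A}$, so by the uniqueness in the implicit function theorem
\[
 G = \g\circ\pi \qquad\text{on a neighborhood of } A.
\]
Differentiating at $A$ gives $D_A G = D_L\g\circ D_A\pi$, and the two desired inequalities will come from applying the submultiplicativity of operator norms in each direction, provided I can control $\|D_A\pi\|$ from above by $\|A^\dagger\|$ and also produce a right-inverse of $D_A\pi$ of norm at most $\|A\|$.

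To analyse $D_A\pi$, I would use the standard identification $T_L\Grass=\Hom(\widehat{L},\widehat{L}^\perp)$ (the hermitian metric on the Grassmannian being the Hilbert--Schmidt norm on this space). Given $\dot A\in T_A\C^{m\times(n+1)}$ and $v\in\widehat{L}$, differentiating the relation $(A+t\dot A)\,v(t)=0$ under the normalization $\dot v\in\widehat{L}^\perp$ yields
\[
 D_A\pi(\dot A)(v) = -A^\dagger\dot A\,v .
\]
Picking an orthonormal basis $e_1,\ldots,e_{s+1}$ of $\widehat{L}$ and extending it to an orthonormal basis of $\C^{n+1}$, I get
\[
 \|D_A\pi(\dot A)\|^2 = \sum_i \|A^\dagger\dot A e_i\|^2 \le \|A^\dagger\|^2\sum_i\|\dot A e_i\|^2 \le \|A^\dagger\|^2\,\|\dot A\|_F^{\,2},
\]
so $\|D_A\pi\|\le\|A^\dagger\|$. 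This yields the right inequality of the theorem via
\[
 \kercond_Z(A,z) = \|A\|\,\|D_A G\| \le \|A\|\,\|D_L\g\|\,\|D_A\pi\| \le \|A\|\,\|A^\dagger\|\,\kappa_Z(L,z) = \kappa(A)\,\kappa_Z(L,z).
\]

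For the left inequality I need a bounded section of $D_A\pi$. Given $\phi\in\Hom(\widehat{L},\widehat{L}^\perp)$, define $\dot A_\phi := -A\circ\phi\circ P_{\widehat{L}}$, where $P_{\widehat{L}}$ is the orthogonal projection onto $\widehat{L}$. Using that $A^\dagger A$ is the identity on $\widehat{L}^\perp$, I check $D_A\pi(\dot A_\phi)=\phi$, and the same orthonormal-basis computation gives
\[
 \|\dot A_\phi\|_F^{\,2} = \sum_i\|A\phi(e_i)\|^2 \le \|A\|^2\,\|\phi\|^2.
\]
For any unit $\dot L\in T_L\Grass$, lifting to $\dot A:=\dot A_{\dot L}$ with $\|\dot A\|_F\le\|A\|$ yields $D_L\g(\dot L)=D_A G(\dot A)$ and hence $\|D_L\g(\dot L)\|\le \|A\|\,\|D_A G\| = \kercond_Z(A,z)$, proving $\kappa_Z(L,z)\le\kercond_Z(A,z)$.

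The routine parts are the chain rule and the two norm estimates; the main step requiring care is pinning down the formula $D_A\pi(\dot A)(v)=-A^\dagger\dot A v$ together with the hermitian identification $T_L\Grass\cong\Hom(\widehat{L},\widehat{L}^\perp)$ with the Hilbert--Schmidt norm, since everything else follows cleanly from Cauchy--Schwarz-type bounds. One should also note that transversality on one side of $G=\g\circ\pi$ transfers to the other, so the ``finite'' cases of $\kercond_Z$ and $\kappa_Z$ match and the bounds extend trivially (with $\infty$) to the singular or non-transversal cases covered by the definitions.
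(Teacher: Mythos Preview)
Your proof is correct and follows the same overall strategy as the paper: factor $G=\g\circ\pi$ through the kernel map, bound $\|D_A\pi\|\le\|A^\dagger\|$ for the upper inequality, and exhibit a section of $D_A\pi$ of norm at most $\|A\|$ for the lower inequality. The only difference is in execution: the paper (Lemma~\ref{le:diego}) reduces via the singular value decomposition to the case $A=D$ and then reads off the bounds from an orthogonal-projection description of $D_{\tilde I}\im$, combining this with an abstract composition lemma (Lemma~\ref{le:subsp-contr}) for the lower bound; you instead derive the explicit formula $D_A\pi(\dot A)(v)=-A^\dagger\dot A\,v$ and construct the section $\dot A_\phi=-A\phi P_{\widehat L}$ directly. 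Your route is slightly more coordinate-free and self-contained, while the paper's SVD normalization makes the structure of the subspace $\cN$ on which $D_A\pi$ is norm-preserving (up to the singular values) more transparent; either way the content is the same.
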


This result, whose proof is provided in Section~\ref{se:kerimcondkappa}, 
shows that $\kercond_Z(A,z)$ can be thought of as being composed of 
the intrinsic condition $\kappa_Z(L,z)$ and of the matrix condition $\kappa(A)$,
where the latter only depends on the way the subspace~$L$ is represented.
In particular, we see that $\kappa_Z(L,z) = \kercond_Z(A,z)$ if $A$ has the minimal condition $\kappa(A)=1$.

We can similarly define an intersection condition number $\imcond_Z(B,z)$, 
when representing the space $\hat{L}$ as the image of $B$, where $B\in\C^{(s+1) \ti (n+1)}$ 
is a full rank matrix. 
For this, a result analogous to Theorem~\ref{pro:kckA} holds, see~\eqref{eq:imcond}.

\subsection{Geometric characterization}
\label{se:geo-char}

We shall provide two geometric characterizations of the intersection condition number $\kappa_Z(L,z)$. 
The first one is local and characterizes $\kappa_Z(L,z)$ as the
inverse of the sine of the minimum angle 
between $T_zL$ and the tangent space $T_{z}Z$. 
This result reminds of the Grassmann condition number of a 
convex cone at a linear subspace, cf.~\cite{amel-pbuerg:11a}. 

The second characterization is global and can be seen as a {\em condition number theorem} 
in the spirit of Eckhart-Young~\cite{eckyoung:36}, Demmel~\cite{Demmel87}, Shub and Smale~\cite{bez1}; 
see \cite{condition} for this and further results of this type. 
We characterize $\kappa_Z(L,z)$ as the inverse of 
the distance of $L$ in the Grassmann manifold to the set of~$\tilde{L}$ 
that intersect $Z$ nontransversally at $z$ (the latter may be considered as the``ill-posed''  
instances of the computational problem). 
Let us remark that the Eckhart-Young Theorem, which is the most familiar variant of such a result, 
characterizes the usual matrix condition number in this way. 

Let $V$ be a finite dimensional hermitian vector space and 
$V_1,V_2\subseteq V$ be linear subspaces of dimensions $m_1,m_2$, respectively.
It is a well-known fact, essentially due to Jordan~\cite{jordan:1875}, 
that the relative position of $V_1$ and $V_2$ 
is determined by the {\em principal angles} $\theta_1,\ldots,\theta_r$ between $V_1$ and~$V_2$, 
where $r:=\min\{m_1,m_2\}$
(see Section~\ref{se:principal-angles} for more details). 
The (principal) angle between two complex lines $\C v_1$ and $\C v_2$ is given by 
$\ang (v_1,v_2) := \arccos\frac{|\lan v_1,v_2\ran |}{\|v_1\|\cdot\| v_2\|}$. 
One can show that 
\begin{equation}\label{eq:def-angle}
 \ang_{\min}(V_1,V_2) := 
  \min\big\{ \ang(v_1,v_2) \mid v_1\in V_1\setminus\{0\}, v_2\in V_2\setminus\{0\}\big\} 
 = \min_j\theta_j ,
\end{equation}
and we call $\ang_{\min}(V_1,V_2)$ the {\em minimum angle} between $V_1$ and~$V_2$. 

Assume now $m_1=m_2=:m$.
We will consider two distance measures on the 
Grassmann manifold~$\Grass(m,V)$ of $m$-dimensional 
linear subspaces of $V$. 
The {\em projection distance} between $V_1$ and~$V_2$ 
is defined as 
$d_p(V_1,V_2) := \| \Pi_{V_1} - \Pi_{V_2}\|$, where 
$\Pi_{V_i}\colon V\to V_i$ denotes the orthogonal projection onto $V_i$; 
cf.~\cite[section~2.6]{golloan:83}.
It is known that 
$d_p(V_1,V_2) = \sin\max\{\theta_1,\ldots,\theta_r\}$;
see \cite[Section~12.4.3]{golloan:83} or \cite[Section~5.3]{Stewart}.

Since $\Grass(m,V)$ is a compact Riemannian manifold, we can define 
the {\em geodesic distance} $d_g(V_1,V_2)$ as 
the minimum length of curves in $\Grass(m,V)$ connecting $V_1$ and $V_2$. 
It is known that $d_g(V_1,V_2) = \sqrt{\theta_1^2+\cdots+\theta_r^2}$; 
see~\cite{Wo:67}. Moreover, $0\le d_g(V_1,V_2)\le\pi/2$. 

We return now to our setting of an irreducible projective variety 
$Z\subseteq\proj^n$. Let $z\in Z$ be a smooth point. 
The following result shows that 
$1/\kappa_Z(L,z)$ equals the sine of the minimum angle between $T_zZ$ and~$T_zL$ 
(interpreted as subspaces of $T_z\proj^n$).

\begin{theorem}\label{th:kappa-alpha}
Let $z\in Z\cap L$ be a smooth point of $Z$ and 
$\a$ be the minimum angle between $T_zZ$ and $T_zL$. 
Then we have $\kappa_Z(L,z) = 1/\sin\a$.
\end{theorem}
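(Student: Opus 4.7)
My plan is to make the identification of the Grassmann tangent space explicit, derive a linear equation for $D_L\gamma$ from the two constraints $z(t)\in L(t)$ and $z(t)\in Z$, and then read off $\|D_L\gamma\|$ as the reciprocal of the smallest singular value of a restricted orthogonal projection, which I will recognize as $\sin\a$.

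First, I would identify $T_L\Grass$ with $\Hom(\widehat{L},\widehat{L}^\perp)$ equipped with the Hilbert--Schmidt inner product; this is the unitarily invariant hermitian metric on $\Grass$ referred to in Section~\ref{se:Grass-conv}. Concretely, each $\phi\in\Hom(\widehat{L},\widehat{L}^\perp)$ produces a curve of nearby subspaces $\widehat{L}_{t\phi}:=\{v+t\phi(v):v\in\widehat{L}\}$ whose tangent vector at $t=0$ equals $\phi$.

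Second, I would differentiate the solution map. Fix a unit representative $z\in\widehat{L}$ and write
$\gamma(\widehat{L}_{t\phi})=[\,v(t)+t\phi(v(t))\,]$
with $v(t)\in\widehat{L}$ and $v(0)=z$. The tangent vector in $T_z\proj^n=z^\perp$ is the orthogonal projection of $\dot{v}(0)+\phi(z)$ onto $z^\perp$. Decomposing $\dot{v}(0)\in\widehat{L}=\C z\oplus T_zL$ and noting $\phi(z)\in\widehat{L}^\perp\subseteq z^\perp$, the $\C z$--component is killed, yielding
\[
\dot{z}=\dot{\eta}+\phi(z),\qquad \dot{\eta}\in T_zL .
\]
Combined with $\dot{z}\in T_zZ$ (since $z(t)\in Z$) and the transversality $T_zZ\oplus T_zL=T_z\proj^n$, this uniquely determines $\dot{z}$: it is the element of $T_zZ$ whose image under the orthogonal projection $\pi\colon z^\perp\to\widehat{L}^\perp$, along the orthogonal splitting $z^\perp=\widehat{L}^\perp\oplus T_zL$, equals $\phi(z)$. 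Hence $D_L\gamma(\phi)=(\pi|_{T_zZ})^{-1}(\phi(z))$.

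Third, I would compute the operator norm. Since $D_L\gamma(\phi)$ depends on $\phi$ only through $\phi(z)$, and every $w\in\widehat{L}^\perp$ is realized as $\phi(z)$ by the rank-one map $\phi(v):=\lan v,z\ran w$ which satisfies $\|\phi\|_{\mathrm{HS}}=\|w\|$, I obtain
\[
\|D_L\gamma\|=\bigl\|(\pi|_{T_zZ})^{-1}\bigr\|=\frac{1}{\sigma_{\min}(\pi|_{T_zZ})} .
\]
For any unit $\dot{z}\in T_zZ$, orthogonality of the splitting $z^\perp=\widehat{L}^\perp\oplus T_zL$ gives $\|\pi(\dot{z})\|^2=1-\|\Pi_{T_zL}(\dot{z})\|^2=\sin^2\ang(\dot{z},T_zL)$. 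Minimizing over $\dot{z}$ and using monotonicity of $\sin$ on $[0,\pi/2]$, this gives $\sigma_{\min}(\pi|_{T_zZ})=\sin\ang_{\min}(T_zZ,T_zL)=\sin\a$ by \eqref{eq:def-angle}, finishing the argument.

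The main obstacle I anticipate is the bookkeeping in the second step: choosing the right representative so that the $\C z$--component of the derivative cancels cleanly, and verifying that the Hilbert--Schmidt norm on $\Hom(\widehat{L},\widehat{L}^\perp)$ is in fact the intrinsic hermitian norm on $T_L\Grass$ used in Definition~\ref{def:condG}. Once those identifications are in place, the remaining computation is a standard principal-angle calculation.
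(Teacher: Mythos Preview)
Your argument is correct. The route, however, differs from the paper's in a meaningful way.

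The paper works in explicit coordinates: it places $z=(1:0:\cdots:0)$ and $L=\{Y=0\}$, picks local equations $f_1,\dots,f_s$ for $Z$, and introduces the matrix $N=-(\partial_Xf)^{-1}\partial_Yf$ so that $T_zZ=\{(0,N\dot y,\dot y)\}$. A separate lemma (Lemma~\ref{le:angle-tangspace}) establishes $\sin\a=(1+\|N\|^2)^{-1/2}$ via the SVD of $N$. Then the paper lifts the solution map through $\ug(n+1)$, parametrizes curves of unitary matrices by $3\times 3$ block matrices, and computes $\dot z=(0,N\dot Q_{20},\dot Q_{20})$; since the canonical metric on $\ug(n+1)$ makes the constraint $\|\dot Q\|\le 1$ equivalent to $\|\dot Q_{20}\|\le 1$, one reads off $\kappa_Z(L,z)=\sqrt{1+\|N\|^2}$.

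Your approach is coordinate-free: you identify $T_L\Grass$ with $\Hom(\widehat L,\widehat L^\perp)$ directly, factor $D_L\gamma$ as evaluation at $z$ followed by $(\pi|_{T_zZ})^{-1}$, reduce the operator norm to $1/\sigma_{\min}(\pi|_{T_zZ})$ via rank-one $\phi$'s, and identify this smallest singular value with $\sin\a$ by the Pythagorean relation for the orthogonal splitting $z^\perp=T_zL\oplus\widehat L^\perp$. This bypasses both the auxiliary matrix $N$ and the lift to the unitary group, and it makes transparent why only $\phi(z)$ matters. The paper's version, in exchange, is more hands-on and makes the verification that the Hilbert--Schmidt norm on $\Hom(\widehat L,\widehat L^\perp)$ is the intrinsic Grassmann metric automatic, since it computes in the very model \eqref{eq:TS-Gr}--\eqref{eq:def-hip} used to \emph{define} that metric; in your write-up that identification is the one point you should justify carefully (as you yourself flag).
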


This result is quite intuitive: a small minimum angle means that there is a 
``glancing intersection'' of $L$ and $Z$, which results in a large intersection 
condition number (which is numerically undesirable). 
Note that we have a transversal intersection of $L$ and $Z$ 
at a point $z\in L\cap Z$ iff this minimum angle is positive. 
We provide the proof in Section~\ref{se:ang_char}. 

As an immediate consequence of Theorem~\ref{th:kappa-alpha} 
we conclude that $\kappa_Z(L,z) \ge 1$. 

\subsection{Inverse distance to ill-posedness}

In the following, we again write 
$\Grass := \Grass(s,\proj^n)$. 
For $z\in\proj^n$ we consider the set 
$\Grass_z := \{L\in\Grass \mid z \in L\}$
of $s$-dimensional projective linear subspaces passing through~$z$,
which can be identified with the Grassmann manifold of 
$(s-1)$-dimensional linear subspaces of $\proj(T_z\proj^n)$, 
since $T_z\proj^n \simeq \C^{n+1}/\C z$. 
Again, let $Z\subseteq\proj^n$ be a fixed $m$-dimensional irreducible projective variety 
and $s+m=n$. 

\begin{definition}\label{def:local-schubert}
Let $z$ be a smooth point of $Z$. 
The {\em local Schubert variety} $\Sigma_z(Z)$ of $Z$ at~$z$
consists of the $L\in\Grass_z$ having a nontransversal intersection 
with $Z$ at $z$, that is, $T_zZ \cap T_z L \ne 0$. 
\end{definition}

So $\Sigma_z(Z)$ consists exactly of the $L\in\Grass_z$ satisfying 
$\kappa_Z(L,z)=\infty$. The set 
$\Sigma_z(Z)$ is a closed subset of $\Grass_z$ 
since $T_zZ \cap T_z L \ne 0$ is equivalent to $\dim (T_zZ+T_zL) < n$,
which can be expressed by the vanishing of minors.

The geodesic distance~$d_g$ and the projection distance~$d_p$ 
both define a metric on the subspace~$\Grass_z$ of $\Grass$.  
(In fact, one can show that $d_g(L_1,L_2)$ equals the geodesic distance between 
$L_1$ and $L_2$ in the Riemannian manifold $\Grass_z$.) 
For $L\in\Grass_z$ we write
$d_g(L,\Sigma_z(Z)) := \min \{ d_g(L,L') \mid L' \in \Sigma_z(Z)\}$
and we define $d_p(L,\Sigma_z(Z))$ analogously. 

We can now state the announced condition number theorem. 
The proof relies on Theorem~\ref{th:kappa-alpha}, 
uses an idea from~\cite{amel-pbuerg:11a}, 
and is provided in Section~\ref{se:pf_CNT}.

\begin{theorem}[Condition Number Theorem]\label{th:CNT}
Let $z$ be a smooth point of $Z$ and $L\in\Grass(s,\proj^n)$ 
be such that $z\in L$. Then we have 
$d_p(L,\Sigma_z(Z)) = \sin d_g(L,\Sigma_z(Z))$ and 
$$
  \kappa_Z(L,z) = \frac{1}{\sin d_g(L,\Sigma_z(Z))} .
$$
\end{theorem}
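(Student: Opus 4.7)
The plan is to reduce the statement to a purely linear-algebraic computation of a Schubert-type distance in a Grassmannian of a tangent space, then apply Theorem~\ref{th:kappa-alpha} to convert the sine of that distance into $\kappa_Z(L,z)$.

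First, I would set up the reduction. Let $M:=T_zZ\subseteq T_z\proj^n$ and, for each $L'\in\Grass_z$, let $W':=T_zL'\subseteq T_z\proj^n$; this gives a bijection $\Grass_z \simeq \Grass(s,T_z\proj^n)$. Using the description of the hermitian metric on $\Grass$ recalled in Section~\ref{se:Grass-conv} (or the remark in the paper that $d_g$ restricted to $\Grass_z$ coincides with its intrinsic geodesic distance), this bijection is an isometry for both $d_g$ and $d_p$. Under it, $\Sigma_z(Z)$ corresponds to $\Sigma:=\{W'\in\Grass(s,T_z\proj^n)\mid W'\cap M\ne 0\}$. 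Setting $W:=T_zL$ and $\a:=\ang_{\min}(M,W)$, Theorem~\ref{th:kappa-alpha} says $\kappa_Z(L,z)=1/\sin\a$, so both claims follow from
\[
d_g(W,\Sigma)=\a \qquad\text{and}\qquad d_p(W,\Sigma)=\sin\a,
\]
which is what I would prove next.

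Second, I would establish the upper bound by explicit construction via principal vectors. Let $\theta_1\le\cdots\le\theta_r$ be the principal angles between $M$ and $W$ ($r=\min\{m,s\}$), so $\a=\theta_1$ by~\eqref{eq:def-angle}. Choose orthonormal principal vectors $u_1,\ldots,u_r\in M$ and $w_1,\ldots,w_r\in W$ satisfying $\lan u_i,w_j\ran=\d_{ij}\cos\theta_i$; complete $w_1,\ldots,w_r$ to an ONB $w_1,\ldots,w_s$ of $W$ (trivial if $s\le m$). Define $W':=\spann(u_1,w_2,\ldots,w_s)$. Since $u_1\in M\cap W'$, we have $W'\in\Sigma$. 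Comparing ONBs of $W$ and $W'$, their principal angles are $\theta_1,0,\ldots,0$, hence $d_g(W,W')=\theta_1=\a$ and $d_p(W,W')=\sin\theta_1=\sin\a$.

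Third, for the matching lower bound, take any $W''\in\Sigma$ and pick a unit vector $v\in W''\cap M$. Let $\phi_1\le\cdots\le\phi_s$ be the principal angles between $W$ and $W''$. The largest principal angle admits the variational description $\phi_s=\max_{v'\in W'',\,\|v'\|=1}\ang(v',W)$, obtained from the singular-value characterization of $\Pi_W|_{W''}$. Since $v\in M\setminus\{0\}$ and $\theta_1=\ang_{\min}(M,W)$, every nonzero $w\in W$ satisfies $\ang(v,w)\ge\theta_1$, hence $\ang(v,W)\ge\theta_1$, and therefore $\phi_s\ge\ang(v,W)\ge\theta_1=\a$. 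This gives
\[
d_g(W,W'')=\sqrt{\phi_1^2+\cdots+\phi_s^2}\ge\phi_s\ge\a, \qquad d_p(W,W'')=\sin\phi_s\ge\sin\a,
\]
using $\phi_s\le\pi/2$ for the $d_p$ bound. Combined with the explicit $W'$ from the previous step, this yields $d_g(W,\Sigma)=\a$ and $d_p(W,\Sigma)=\sin\a$, and then $\kappa_Z(L,z)=1/\sin\a=1/\sin d_g(L,\Sigma_z(Z))$ together with $d_p(L,\Sigma_z(Z))=\sin d_g(L,\Sigma_z(Z))$.

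The main obstacle is the lower bound: one must move from the geometric fact $v\in W''\cap M$ (which controls only a single direction) to a statement about \emph{all} principal angles between $W''$ and $W$. The key trick is to exploit $\phi_s$ rather than $\phi_1$, using that the largest principal angle is realized as the maximum of $\ang(v',W)$ over unit $v'\in W''$; this is exactly the inequality direction that couples with the constraint $v\in M$. A secondary, more technical point is to verify cleanly that the identification $\Grass_z\simeq\Grass(s,T_z\proj^n)$ is isometric so that the reduction to the linear-algebraic problem is legitimate.
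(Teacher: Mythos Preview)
Your proof is correct and follows the same overall architecture as the paper: reduce to the linear-algebraic problem of computing the distance from $W=T_zL$ to the Schubert variety $\Sigma=\{W':W'\cap T_zZ\ne 0\}$ inside $\Grass(s,T_z\proj^n)$, then invoke Theorem~\ref{th:kappa-alpha}. The paper packages the distance computation as Proposition~\ref{pro:char-min-angle}, whose proof reduces to the case $\dim T=1$ (via $S_T=\bigcup_{0\ne v\in T}S_{\C v}$) and then cites \cite[Lemma~3.2]{amel-pbuerg:11a} for that one-dimensional case. You instead give a direct, self-contained argument: an explicit nearest point $W'=\spann(u_1,w_2,\ldots,w_s)$ built from principal vectors for the upper bound, and the variational identity $\phi_s=\max_{v'\in W''}\ang(v',W)$ for the lower bound. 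Your route avoids the external reference and the reduction step, at the cost of spelling out the principal-vector machinery; the paper's route is shorter but less self-contained. For the isometry of $\Grass_z\simeq\Grass(s,T_z\proj^n)$, the paper makes this precise via Lemma~\ref{le:pr-ang-red} (removing the common line $\C z$ deletes one zero from the list of principal angles, leaving $d_g$ and $d_p$ unchanged), which is exactly the clean justification you flag as needing.
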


Definition~\ref{def:condG} introduced the intersection condition number 
$\kappa_Z(L,z)$ at a point $z\in Z\cap L$. We now make a global definition. 
In order to avoid the discussion of mathematical subtleties not relevant for our purposes, 
we define $\Irrel(Z)$ as the set of all $L\in\Grass$ with the property 
that $\Sing(Z) \cap L \ne \emptyset$ or $Z\cap L$ has positive dimension. 
We will consider all $L\in\Irrel(Z)$ as ill-posed. 
It is easy to see that $\Irrel(Z)$ is contained in an algebraic hypersurface of $\Grass$
and thus has the measure zero. 

\begin{definition}\label{def:kappa-global}
The {\em intersection condition number} of $L\in\Grass\setminus\Irrel(Z)$ 
(with respect to the variety~$Z$) is defined as 
$$
 \kappa_Z(L) \ := \max_{z\in Z\cap L} \kappa_Z(L,z) .
$$
Moreover, we define $\kappa_Z(L) = \infty$ if $L\in\Irrel(Z)$. 
We say that $L$ is {\em ill-posed} iff $\kappa_Z(L) = \infty$.  
\end{definition}

In order to understand the set of ill-posed $L$ in a geometric way, 
let us make the following definition 
(with follows the naming in~\cite{sturmfels:14}). 

\begin{definition}\label{def:hurwitz}
The {\em Hurwitz variety} $\Sigma(Z)$ of $Z$ is defined as the Zariski closure of 
the union of the local Schubert varieties $\Sigma_z(Z)$, 
taken over all regular points~$z$ of $Z$. 
\end{definition}

In this definition 
we may as well take the closure with respect to the Euclidean topology, 
since the union of the local Schubert varieties 
is a constructible set, see \cite[\S 2C]{mumford}.
We note that $\Sigma(Z) = \Irrel(Z)$ if $\deg Z = 1$, and 
$\Sigma(Z)$ is not a hypersurface in this case 
(compare Theorem~\ref{pro:sturmfels} below).

We provide the proof of the following auxiliary result at the beginning  
of Section~\ref{se:pf_CNT}. 

\begin{lemma}\label{le:Sigma}
If $L\in\Sigma(Z)\setminus\Irrel(Z)$, then there exists a smooth $z\in Z\cap L$ 
such that $L\in\Sigma_z(Z)$. In particular, if $Z$ is smooth, then $\Sigma(Z)$
equals the union of the $\Sigma_z(Z)$ over $z\in Z$.
\end{lemma}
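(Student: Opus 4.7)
The plan is to approximate $L$ by elements of $U := \bigcup_{z\in\Reg(Z)}\Sigma_z(Z)$ and transfer non-transversality along this sequence to the limit. Since $U$ is constructible (by the remark after Definition~\ref{def:hurwitz}), its Euclidean closure coincides with $\Sigma(Z)$, so from $L\in\Sigma(Z)\setminus\Irrel(Z)$ I pick a sequence $L_n\to L$ with $L_n\in U$ and, for each $n$, a witness $z_n\in\Reg(Z)\cap L_n$ satisfying $T_{z_n}Z\cap T_{z_n}L_n\ne 0$.

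By compactness of $Z$, some subsequence satisfies $z_n\to z$ for some $z\in Z$. The incidence relation $\{(z',L')\mid z'\in L'\}$ is closed in $\proj^n\times\Grass$, so $z\in L$; and the hypothesis $L\notin\Irrel(Z)$ makes $Z\cap L$ finite and disjoint from $\Sing(Z)$, whence $z\in\Reg(Z)$. On the open set $\Reg(Z)$, the Gauss map $z'\mapsto T_{z'}Z$ is continuous, and $(z',L')\mapsto T_{z'}L'=\widehat L'/\C z'$ is continuous on the incidence variety, so the pairs $(T_{z_n}Z,T_{z_n}L_n)$ converge to $(T_zZ,T_zL)$ inside the Grassmann bundle of pairs of subspaces of $T_\bullet\proj^n$. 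Upper semicontinuity of the intersection dimension then forces $\dim(T_zZ\cap T_zL)\ge 1$, i.e.\ $L\in\Sigma_z(Z)$, which proves the main claim.

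For the second assertion, $\bigcup_{z\in Z}\Sigma_z(Z)\subseteq\Sigma(Z)$ is immediate. Conversely, given $L\in\Sigma(Z)$ with $Z$ smooth, either $L\notin\Irrel(Z)$ and the first part applies, or $L\in\Irrel(Z)$; in the latter case $\Sing(Z)=\emptyset$ forces $\dim(Z\cap L)>0$, and for any point $z$ on a positive-dimensional component $C$ of $Z\cap L$ one has $0\ne T_zC\subseteq T_z(Z\cap L)\subseteq T_zZ\cap T_zL$, so $L\in\Sigma_z(Z)$.

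The main obstacle is the limit transfer in the middle step: one needs continuous dependence of both tangent spaces on $(z_n,L_n)$ all the way to the limit. This is possible precisely because the hypothesis $L\notin\Irrel(Z)$ forces the accumulation point $z$ to lie in $\Reg(Z)$, making the Gauss map continuous on a neighborhood of $z$; the remaining content is the classical upper semicontinuity of $\dim(V\cap W)$ for subspaces varying in a fixed Grassmannian.
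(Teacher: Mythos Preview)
Your proof is correct but takes a different route from the paper. You argue analytically: choose a sequence $L_n\to L$ in $\bigcup_{z\in\Reg(Z)}\Sigma_z(Z)$ with witnesses $z_n$, extract a subsequential limit $z$ by compactness of $Z$, use $L\notin\Irrel(Z)$ to force $z\in\Reg(Z)$, and then pass the non-transversality to the limit via continuity of the Gauss map and closedness of the condition $\dim(T_\bullet Z\cap T_\bullet L)\ge 1$. The paper argues algebraically: it introduces the incidence set $\Sigma'_o=\{(z,L)\in\Reg(Z)\times\Grass\mid L\in\Sigma_z(Z)\}$, notes that $\Sigma'_o$ is Zariski closed in $\Reg(Z)\times\Grass$ by a vanishing-of-minors condition, and that its closure $\Sigma'\subseteq Z\times\Grass$ projects onto $\Sigma(Z)$; given $L\in\Sigma(Z)\setminus\Irrel(Z)$ one lifts to $(z,L)\in\Sigma'$, observes $z\in L$ and hence $z\in\Reg(Z)$, and then closedness of $\Sigma'_o$ in the regular locus gives $(z,L)\in\Sigma'_o$ immediately. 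Both arguments ultimately rest on the same closedness, but the paper's version avoids sequences, the Gauss map, and the bundle language entirely, trading these for the fact that the projection from $Z\times\Grass$ is closed. Your handling of the second assertion---explicitly treating $L\in\Irrel(Z)$ via a positive-dimensional component of $Z\cap L$ and its nonzero tangent space---is more thorough than the paper, which leaves that case implicit. One minor point: your limit argument is cleanest if phrased in terms of the affine tangent spaces $\hat T_{z_n}Z,\hat L_n\subseteq\C^{n+1}$, so that all subspaces live in a fixed ambient space and the semicontinuity statement is the standard one on $\Grass(m+1,\C^{n+1})\times\Grass(s+1,\C^{n+1})$.
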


This lemma implies that $\kappa_Z(L)=\infty$ for all $L\in\Sigma(Z)$.
Hence we obtain from Definition~\ref{def:kappa-global}: 
$$
 \{ L\in \Grass \mid \kappa_Z(L) = \infty \} = \Sigma(Z) \cup \Irrel(Z) .
$$

We define the geodesic distance of $L$ to $\Sigma(Z)$ as 
$d_g(L,\Sigma(Z)) := \min \{ d_g(L,L') \mid L' \in \Sigma(Z)\}$. 
Moreover, for $\e\ge 0$, we define the $\e$-neighborhood (or $\e$-tube) around $\Sigma(Z)$ by 
\begin{equation}\label{eq:def-tube}
 T(\Sigma(Z),\e) \ :=\ \big\{ L \in \Grass \mid d_g(L,\Sigma) \le \arcsin\e \big\} .
\end{equation}

\begin{corollary}\label{cor:kappa-tube}
We have 
$\{ L\in\Grass \mid \kappa_Z(L) \ge \e^{-1} \} \subseteq T(\Sigma(Z), \e) \cup \Irrel(Z)$. 
\end{corollary}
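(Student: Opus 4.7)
The plan is to unpack the global definition of $\kappa_Z(L)$ to a local one at some smooth intersection point and then invoke the Condition Number Theorem.

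First I would fix an $L\in\Grass$ with $\kappa_Z(L)\ge\e^{-1}$ and assume $L\notin\Irrel(Z)$, since otherwise $L$ lies in the right-hand side by definition. By the definition of $\Irrel(Z)$, the intersection $Z\cap L$ is a finite set avoiding $\Sing(Z)$, so every $z\in Z\cap L$ is a smooth point of $Z$. Since $\kappa_Z(L) = \max_{z\in Z\cap L}\kappa_Z(L,z)$ is a maximum over a finite set of smooth intersection points, there exists some $z\in Z\cap L$ with $z\in\Reg(Z)$ and $\kappa_Z(L,z)\ge\e^{-1}$.

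Next I would apply Theorem~\ref{th:CNT} at this smooth point $z$: it gives
\[
 \kappa_Z(L,z) \;=\; \frac{1}{\sin d_g(L,\Sigma_z(Z))} .
\]
From $\kappa_Z(L,z)\ge\e^{-1}$ I get $\sin d_g(L,\Sigma_z(Z))\le \e$. Because $d_g$ takes values in $[0,\pi/2]$ on $\Grass_z$, the arcsine is well defined and monotone here, so $d_g(L,\Sigma_z(Z))\le\arcsin\e$.

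Finally I would compare $\Sigma_z(Z)$ with $\Sigma(Z)$. Since $z$ is a smooth point of $Z$, Definition~\ref{def:hurwitz} shows $\Sigma_z(Z)\subseteq\Sigma(Z)$, hence
\[
 d_g(L,\Sigma(Z)) \;\le\; d_g(L,\Sigma_z(Z)) \;\le\; \arcsin\e ,
\]
which by~\eqref{eq:def-tube} means $L\in T(\Sigma(Z),\e)$, completing the inclusion.

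The only mildly subtle point is justifying that the max in the definition of $\kappa_Z(L)$ is indeed attained at a \emph{smooth} point of $Z$ under the assumption $L\notin\Irrel(Z)$; this is immediate from the definition of $\Irrel(Z)$, so there is no real obstacle. The rest is a direct chain of implications from Theorem~\ref{th:CNT} and the set-theoretic inclusion $\Sigma_z(Z)\subseteq\Sigma(Z)$.
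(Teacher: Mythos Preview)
Your proof is correct and follows essentially the same route as the paper: pick a smooth point $z$ where the maximum is attained, apply Theorem~\ref{th:CNT} to get $\sin d_g(L,\Sigma_z(Z))\le\e$, and then use the inclusion $\Sigma_z(Z)\subseteq\Sigma(Z)$ to conclude $L\in T(\Sigma(Z),\e)$. The paper's version is terser but the logic is identical.
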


\begin{proof}
Let $L\not\in\Irrel(Z)$ with $\kappa_Z(L) \ge \e^{-1}$. 
By Definition~\ref{def:kappa-global}, there exists 
$z\in Z\cap L$ such that $\kappa_Z(L) = \kappa_Z(L,z)$. 
The point $z$ is smooth since $L\not\in\Irrel(Z)$. 
Therefore, 
$\sin d_g(L,\Sigma(Z)) \le \sin d_g(L,\Sigma_z(Z)) = \kappa_Z(L,z)^{-1} \le \e$, 
where we used Theorem~\ref{th:CNT} for the last equality. 
\end{proof}

\subsection{Volume of hypersurfaces in Grassmannians}
\label{se:vtub}

Again let $\Grass=\Grass(s,\proj^n)$. 
Recall the Pl\"ucker embeddding (e.g., see~\cite[\S3.1]{GKZ}): 
\begin{equation}\label{eq:plucker}
\iota\colon\Grass \hookrightarrow \proj(\Lambda^{s+1}\C^{n+1}),\
 W \mapsto \proj(\Lambda^{s+1}W) .
\end{equation}
Let $\Hy$ be an irreducible hypersurface in $\Grass$. 
It is known that the vanishing ideal of~$\iota(\Hy)$ 
in the homogeneous coordinate ring of $\iota(\Grass)$ 
is principal (cf.~\cite[Chap.~3, Prop.~2.1]{GKZ}). 
We call the degree of its irreducible generator the 
{\em relative degree} of $\Hy$ and denote it by $\rdeg\Hy$.
Thus $\iota(\Hy)$ is obtained by intersecting 
$\iota(\Grass)$ with an irreducible hypersurface 
of degree $\rdeg\Hy$ 
in the projective space $\proj(\Lambda^{s+1}\C^{n+1})$. 
Therefore, by B\'ezout's theorem, we have 
\begin{equation}\label{eq:rdeg}
\deg\iota(\Hy) = \rdeg\Hy\cdot \deg\iota(\Grass).
\end{equation}

The regular locus of a hypersurface $\Hy$ of $\Grass$ is a smooth submanifold of $\Grass$ 
and hence it has a well defined volume, defined via the restriction of the 
Riemannian metric on $\Grass$, that we denote by $\vol(\Hy)$. 

Consider now the Chow variety 
\begin{equation}\label{eq:H-lin}
 \Hyl := \{ L\in\Grass \mid \proj^{m-1} \cap L \ne \emptyset \} 
\end{equation}
of a fixed linear subspace $\proj^{m-1}\subseteq\proj^n$ of dimension~$m-1$. 
It is easy to see that $\Hyl$ is an irreducible hypersurface of $\Grass$ of degree one. 
In a sense, these are the ``simplest'' hypersurfaces of  $\Grass$. 
They are special Schubert varieties. 
(It is known that $\Hyl$ is singular except when $m=1$, cf. \cite[\S3.4.1]{manivel:01})

We present the short proof of the following result in Section~\ref{se:vol-HS}
and remark that a previous version of our paper contained a 
direct, but technically involved proof of this result. 

\begin{theorem}\label{th:HlinC}
\begin{enumerate}
\item An algebraic hypersurface $\Hy$ of $\Grass$ satisfies 
$\vol(\Hy) = \rdeg\Hy \cdot \vol(\Hyl) $. 

\item The volume of the special Schubert variety $\Hyl$ in $\Grass$ satisfies
\begin{equation*}
 \frac{\vol(\Hyl)}{\vol(\Grass)}
  = \frac{1}{\pi}\cdot \dim\Grass
  = \frac{s+1}{\sqrt{\pi}}\cdot\frac{m}{\sqrt{\pi}} 
  = \pi \cdot \frac{\vol(\proj^s)}{\vol(\proj^{s+1})} \cdot \frac{\vol(\proj^{m-1})}{\vol(\proj^{m})} .
\end{equation*}
\end{enumerate}
\end{theorem}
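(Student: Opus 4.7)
The plan is to reduce both parts to the Wirtinger degree-volume formula for complex projective varieties by going through the Plücker embedding. Specifically, I would use the fact that $\iota$ from~\eqref{eq:plucker} is an isometric Kähler embedding: the Fubini-Study metric on $\proj(\Lambda^{s+1}\C^{n+1})$ restricts to the standard unitary-invariant hermitian metric on $\Grass$ (this is the natural choice of normalization, and it is forced up to scale by $U(n+1)$-invariance, so one only needs to verify the constant matches the conventions fixed in Section~\ref{se:Grass-conv}). Once this is in place, for any irreducible complex projective subvariety $X\subseteq\proj^N$ of complex dimension $k$, Wirtinger's theorem gives
\begin{equation*}
  \vol(X) \;=\; \deg(X)\cdot \vol(\proj^k), \qquad \vol(\proj^k) = \frac{\pi^k}{k!}.
\end{equation*}

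For part (1), I would apply this formula to both $\iota(\Hy)$ and $\iota(\Hyl)$. Both are irreducible subvarieties of $\proj(\Lambda^{s+1}\C^{n+1})$ of the same complex dimension $d-1$, where $d := \dim_\C \Grass = (s+1)m$. Using \eqref{eq:rdeg} together with the fact that $\rdeg\Hyl = 1$, the degrees satisfy $\deg\iota(\Hy) = \rdeg(\Hy)\cdot\deg\iota(\Grass)$ and $\deg\iota(\Hyl) = \deg\iota(\Grass)$. Taking the ratio and canceling the common factor $\vol(\proj^{d-1})$ yields
\begin{equation*}
  \frac{\vol(\Hy)}{\vol(\Hyl)} \;=\; \frac{\deg\iota(\Hy)}{\deg\iota(\Hyl)} \;=\; \rdeg(\Hy).
\end{equation*}

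For part (2), I would apply Wirtinger's formula to $\iota(\Grass)$ itself (dimension $d$) and to $\iota(\Hyl)$ (dimension $d-1$). Since their degrees in $\proj(\Lambda^{s+1}\C^{n+1})$ coincide, the ratio collapses to a pure volume-of-projective-spaces ratio:
\begin{equation*}
  \frac{\vol(\Hyl)}{\vol(\Grass)} \;=\; \frac{\vol(\proj^{d-1})}{\vol(\proj^{d})} \;=\; \frac{d}{\pi} \;=\; \frac{\dim\Grass}{\pi}.
\end{equation*}
The remaining two expressions for this ratio are then purely algebraic manipulations using $\dim\Grass = (s+1)m$ and the identity $\vol(\proj^{k-1})/\vol(\proj^k) = k/\pi$ applied with $k=s+1$ and $k=m$.

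The only step that is not a one-line invocation of a standard result is the verification that the Plücker embedding is \emph{isometric} (not merely conformal) with respect to the metric conventions fixed earlier in the paper. This is where I expect the real care is needed: one has to check that the invariant hermitian metric on $\Grass$ declared in Section~\ref{se:Grass-conv} is normalized to agree with $\iota^{*}\omega_{\mathrm{FS}}$ rather than a scalar multiple of it. Once that compatibility is confirmed, both assertions follow immediately from Wirtinger and \eqref{eq:rdeg}, which explains why the proof can be made short compared to the earlier direct computation.
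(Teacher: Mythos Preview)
Your proposal is correct and follows essentially the same route as the paper: the paper isolates the isometry of the Pl\"ucker embedding as a separate lemma (Lemma~\ref{le:isometric}, proved by computing the derivative of the Pl\"ucker map at the identity via the map $\pi$ from~\eqref{eq:pi-map}), then invokes the degree--volume formula (Theorem~\ref{th:degvol}) together with~\eqref{eq:rdeg} exactly as you outline. Your identification of the isometry check as the only nontrivial step is spot-on; the paper's verification of the normalization constant is precisely the content of Lemma~\ref{le:isometric}.
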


The following result is due to Sturmfels~\cite{sturmfels:14}. 

\begin{theorem}\label{pro:sturmfels}
The Hurwitz variety $\Sigma(Z)$ is an irreducible hypersurface in $\Grass$ if $\deg Z\ge 2$. 
Moreover, if $\Sing(Z)$ has codimension in~$Z$ at least two, then 
the relative degree of $\Sigma(Z)$ satisfies 
$\rdeg\Sigma(Z) = 2\deg Z + 2g(Z) -2$, 
where $g(Z)$ is the sectional genus of $Z$, i.e., 
the arithmetic genus of the intersection of $Z$ with a generic linear subspace 
of codimension $s-1$.
\end{theorem}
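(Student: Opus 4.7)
The plan is to prove the two parts separately. Part~(1) follows from a standard incidence variety argument; part~(2) is a Riemann--Hurwitz computation applied to a generic linear pencil of $s$-planes.

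For part~(1), I would form the incidence variety
\[
 I \ := \ \big\{(L,z)\in\Grass\times\Reg(Z) \ :\ z\in L,\ T_zZ\cap T_zL\ne 0\big\},
\]
and project onto the second factor. The fiber over $z$ is the local Schubert variety $\Sigma_z(Z)\subseteq\Grass_z$; since $\Grass_z\cong\Grass(s,T_z\proj^n)$ has dimension $sm$ and the condition ``$T_zL$ meets the fixed $m$-plane $T_zZ$'' is the special Schubert condition of codimension one, this fiber is irreducible of dimension $sm-1$. Hence $I$ is irreducible of dimension $m+sm-1=\dim\Grass-1$, and by Lemma~\ref{le:Sigma} the closure of the image of the first projection $I\to\Grass$ is precisely $\Sigma(Z)$, which is therefore irreducible. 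The hypersurface statement then reduces to checking that this projection is generically finite. When $\deg Z\ge 2$, a generic $L\in\Sigma(Z)$ has a unique tangency point $z$; this fails when $\deg Z=1$ because nontransversality of a linear $Z$ with $L$ already forces $L\in\Irrel(Z)$, so the whole argument collapses.

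For part~(2), I would realize $\rdeg\Sigma(Z)$ as the cardinality of the intersection of $\Sigma(Z)$ with a generic Pl\"ucker line. Pick generic $(s+1)$-dimensional subspaces $\widehat L_0,\widehat L_\infty\subseteq\C^{n+1}$ whose intersection has dimension~$s$, and form the pencil $\{L_t:t\in\proj^1\}$ whose representatives are spanned by $\widehat L_0\cap\widehat L_\infty$ together with a varying line in the $2$-dimensional quotient. This pencil is a line in Pl\"ucker coordinates, so by~\eqref{eq:rdeg} the number of $t$ with $L_t\in\Sigma(Z)$ equals $\rdeg\Sigma(Z)$. The union $M=\bigcup_{t}L_t$ is a linear $(s+1)$-dimensional subspace of $\proj^n$, and $C:=Z\cap M$ is a generic curve section of $Z$. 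By Bertini and the hypothesis $\codim_Z\Sing(Z)\ge 2$, the curve $C$ is smooth and irreducible with geometric genus equal to the sectional genus $g(Z)$; sending $(t,z)\in C$ with $z\in L_t$ to~$t$ defines a morphism $\pi\colon C\to\proj^1$ of degree $\deg Z$ by B\'ezout. A direct tangent-space computation shows that $z\in C$ is a critical point of $\pi$ iff $T_zL_t\cap T_zZ\ne 0$, i.e., $L_t\in\Sigma_z(Z)$. For a sufficiently generic pencil all ramifications are simple and lie over distinct values of $t$, so the total ramification equals the number of $t$ with $L_t\in\Sigma(Z)$; Riemann--Hurwitz then yields
\[
 2g(Z)-2 \ =\ -2\deg Z + \rdeg\Sigma(Z),
\]
from which the claimed formula follows.

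The main obstacle is the genericity step: one must argue that a generic Pl\"ucker line simultaneously produces a smooth irreducible curve section of $Z$ of the correct genus and a simply branched cover of $\proj^1$ whose critical values correspond bijectively to the pencil's intersection with $\Sigma(Z)$. The codimension hypothesis on $\Sing(Z)$ is precisely what keeps $C$ disjoint from $\Sing(Z)$, so that nontransversality of $L_t$ with $Z$ detects exactly the ramifications of~$\pi$; without it, singular intersection points contribute to $\Sigma(Z)$ in a way that would distort the degree count and require a correction term beyond $2\deg Z+2g(Z)-2$.
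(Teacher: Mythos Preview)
The paper does not give its own proof of this theorem: it is stated with attribution to Sturmfels and a citation to \cite{sturmfels:14}, and no argument is supplied. So there is nothing in the paper to compare against directly.

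That said, your outline is essentially the argument Sturmfels gives. The incidence-variety count for irreducibility and the Riemann--Hurwitz computation on a generic pencil of $s$-planes through a fixed $(s-1)$-plane are exactly the ingredients in \cite{sturmfels:14}; your identification of the pencil with a line under the Pl\"ucker embedding, and of ramification points of $\pi\colon C\to\proj^1$ with tangency points $L_t\in\Sigma_z(Z)$, matches his reasoning. Your last paragraph correctly isolates where the work lies: the Bertini/genericity package ensuring $C$ is smooth irreducible of genus $g(Z)$ with only simple branch points over distinct values.

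One point in part~(1) deserves a sharper justification. You assert that for $\deg Z\ge 2$ the projection $I\to\Grass$ is generically finite because ``a generic $L\in\Sigma(Z)$ has a unique tangency point.'' What you actually need is only that the generic fiber is \emph{finite}, i.e.\ that $\Sigma(Z)\not\subseteq\Irrel(Z)$; once $L\notin\Irrel(Z)$ the fiber is contained in the finite set $Z\cap L$. The cleanest way to see this (and simultaneously to confirm the hypersurface claim) is to feed back the pencil from part~(2): when $\deg Z\ge 2$ the degree-$\deg Z$ cover $\pi\colon C\to\proj^1$ must ramify, so a generic Pl\"ucker line meets $\Sigma(Z)$, forcing $\Sigma(Z)$ to be a hypersurface and in particular not contained in the proper subvariety $\Irrel(Z)$. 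Your remark on $\deg Z=1$ is correct: nontransversality then forces $\dim(Z\cap L)\ge 1$, so every $L\in\Sigma(Z)$ lies in $\Irrel(Z)$ and the fibers of $I\to\Sigma(Z)$ are positive-dimensional, collapsing the image to codimension at least two.
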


Sturmfels' work~\cite{sturmfels:14} focused on the irreducible generator of $\Sigma(Z)$, 
for which he coined the name ``Hurwitz form'', due to the apparent similarity 
of the formula for $\deg\Sigma(Z)$ with the Riemann-Hurwitz formula.

The following corollary is an immediate consequence of
Theorem~\ref{pro:sturmfels} and Theorem~\ref{th:HlinC}.

\begin{corollary}\label{cor:vol_Hurwitz}
Let $Z\subseteq\proj^n$ be an irreducible projective variety 
such that $\deg(Z)\ge 2$ and the codimension of $\Sing(Z)$ in $Z$ is at least two. 
Let $g(Z)$ denote the sectional genus of $Z$. Then we have
$$
 \frac{\vol(\Sigma(Z))}{\vol(\Grass)} \ =\ 
 \frac{2}{\pi} \, (\deg(Z) + g(Z) -1) \cdot \dim Z \cdot (\codim Z +1 ) . 
$$
\end{corollary}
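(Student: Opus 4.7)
The plan is to proceed by direct substitution, chaining together the two preceding theorems and checking one elementary dimension count. There is no real obstacle here; the corollary is advertised as immediate, and the plan amounts to verifying that the three inputs compose into exactly the stated expression.

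First, I would invoke Theorem \ref{pro:sturmfels}: under the hypotheses $\deg Z \ge 2$ and $\codim_Z \Sing(Z) \ge 2$, the Hurwitz variety $\Sigma(Z)$ is an irreducible hypersurface in $\Grass$ with relative degree
$$
\rdeg\Sigma(Z) = 2\deg Z + 2g(Z) - 2 = 2\,(\deg Z + g(Z) - 1).
$$
Next, since $\Sigma(Z)$ is an (irreducible) algebraic hypersurface, Theorem \ref{th:HlinC}(1) applies and gives $\vol(\Sigma(Z)) = \rdeg\Sigma(Z)\cdot \vol(\Hyl)$. Dividing by $\vol(\Grass)$ and using Theorem \ref{th:HlinC}(2), which states $\vol(\Hyl)/\vol(\Grass) = (\dim\Grass)/\pi$, I obtain
$$
\frac{\vol(\Sigma(Z))}{\vol(\Grass)} = \frac{2\,(\deg Z + g(Z) - 1)}{\pi}\cdot \dim\Grass.
$$

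Finally, I would translate $\dim\Grass$ into the invariants of $Z$. Since $\Grass = \Grass(s,\proj^n)$ with $s = n-m = \codim Z$ and $m = \dim Z$, one has the standard formula $\dim\Grass = (s+1)(n-s) = (\codim Z + 1)\cdot \dim Z$. Substituting this into the previous display yields exactly
$$
\frac{\vol(\Sigma(Z))}{\vol(\Grass)} = \frac{2}{\pi}\,(\deg Z + g(Z) - 1)\cdot \dim Z \cdot (\codim Z + 1),
$$
completing the proof. The only point that requires any care, rather than pure symbol-pushing, is confirming that the hypotheses of Theorem \ref{pro:sturmfels} indeed ensure $\Sigma(Z)$ is a hypersurface so that Theorem \ref{th:HlinC}(1) is applicable; this is given verbatim by the theorem statement.
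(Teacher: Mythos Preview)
Your proof is correct and matches the paper's approach exactly: the paper states the corollary as an immediate consequence of Theorem~\ref{pro:sturmfels} and Theorem~\ref{th:HlinC}, and you have simply written out the chain of substitutions, including the dimension count $\dim\Grass = (s+1)m = (\codim Z + 1)\cdot\dim Z$.
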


\subsection{Towards a probabilistic analysis} 
\label{se:vol-tubes}

Suppose that $L\in\Grass$ is randomly chosen with respect to the uniform distribution 
on $\Grass$. We would like to show that it is unlikely that $\kappa_Z(L)$ is large. 
By Corollary~\ref{cor:kappa-tube}, this reduces to upper bounding 
the volume of the tubes around the Hurwitz variety $\Sigma(Z)$. 
(Since $\Irrel(Z)$ has measure zero, it is clearly irrelevant for this task, hence 
its naming.) 
More specifically, the goal is to establish, for $0<\e \le 1$ and $L\in\Grass$ chosen uniformly at random,
an upper bound on the tail probability of the form 
$$
 \Prob_{L\in\Grass} \{ \kappa_Z(L) \ge \e^{-1} \} \ \le\ 
 \frac{\vol(T(\Sigma(Z),\e))}{\vol(\Grass)} \ \le \ K \e^2 ,
$$
where $K$ is polynomially bounded in the dimension~$n$ and 
the degree of $\Sigma(Z)$. 
Such a bound implies 
$\E_{L\in\Grass}(\kappa_Z) = \int_1^\infty \Prob_{L\in\Grass}\{ \kappa_Z(L) \ge t \}\, dt 
    \ \le\  K \int_1^\infty \frac{dt}{t^2} = K$ 
and hence  
$\E(\log\kappa_Z) \le \log \E(\kappa_Z) \le \log K$. 

In~\cite{BCL:06}, this program was carried out for the volume of the tube  
around a subvariety of complex projective space. 
It would be interesting to extend this from projective spaces to Grassmannians. 
Since for a hypersurface $\Hy$ of $\Grass$ we have 
$\vol(T(\Hy,\epsilon)) = \vol(\Hy) \pi \epsilon^2 + o(\epsilon^2)$ 
in first order approximation, Corollary~\ref{cor:vol_Hurwitz} 
provides a first step towards this task. 

We mention that for a smooth irreducible hypersurface $\Hy$ of $\Grass$, 
Gray~\cite[Thm. 1.1(i)]{gray:84} proved the upper bound 
$\vol(T(\Hy,\sin\theta)) \le  \vol(\Hy)\cdot \pi \theta^2$
if $\theta$ is smaller than the geodesic distance from $\Hy$ 
to its nearest focal point.
Unfortunately, the Hurwitz hypersurfaces are not smooth in general, 
so that this result cannot be used for our purposes. 

The task of bounding the volume of tubes around hypersurfaces in complex 
Grassmannians will be studied in a follow-up paper.

\subsection{Acknowledgments.} 
I thank Sameer Agarwal, Hon Leung Lee, Rekha Thomas, and 
Bernd Sturmfels for suggesting these investigations, 
motivated by their work in algebraic vision~\cite{alts:14}. 
Special thanks go to Dennis Amelunxen for his detailed comments 
and suggestions for improvement of an earlier version of this work.
I also thank Paul Breiding, Kathl\'en Kohn and Pierre Lairez for 
helpful discussions and comments on the manuscript. 
I am grateful for the financial support and inspiring working atmosphere 
at the Simons Institute for the Theory of Computing, where the basis 
of this work was laid.  

\section{Preliminaries}
\label{se:preliminaria}

\subsection{Unitary groups}\label{se:ug}

The unitary group 
$\ug(n) :=\{Q\in\C^{n\times n} \mid Q^*Q=I_n\}$ 
is a compact smooth submanifold of $\C^{n\times n}$. 
Its tangent space at $I_n$ is given by the space of skew-hermitian matrices
$$
 T_{I_n}\ug(n) = \{A\in\C^{n\times n} \mid A^* + A = 0 \}.
$$
We define an inner product on $T_{I_n}\ug(n)$ 
by setting for $A=[a_{ij}], B=[b_{ij}]$: 
\begin{equation}\label{eq:ug-can-metric}
\langle A,B\rangle := \sum_{i} \Im(a_{ii}) \Im(b_{ii}) 
  + \frac12 \sum_{i\ne j} a_{ij} \bar{b}_{ij} 
  = \sum_{i} \Im(a_{ii}) \Im(b_{ii})  + \sum_{i< j} a_{ij} \bar{b}_{ij} ,
\end{equation}
where $\Im(z)$ denotes the imaginary part of $z\in\C$. 
An orthonormal basis of $T_{I_n}\ug(n)$ is given by 
$$
 \big\{ E_{ij} -E_{ji} \mid i<j\big\} \cup
 \big\{ \sqrt{-1}(E_{ij} + E_{ji}) \mid i< j \big\} \cup 
 \big\{ \sqrt{-1} E_{ii} \mid i \big\} ,
$$
where $E_{ij}$ denote the standard basis vectors of $\C^{n\times n}$.  
We extend this to a Riemannian metric on $\ug(n)$
by requiring that the left-multiplications are isometries 
and call the resulting metric the canonical one. 
It is important to realize that this metric is essentially different from the 
Riemannian metric on $\ug(n)$ that is induced by the Euclidean metric 
of $\C^{n\times n}$.  
(The reason is the contribution in~\eqref{eq:ug-can-metric}
from the imaginary elements on the diagonal; 
in the analogous situation of the orthogonal group, the Riemannian metrics 
differ by a constant factor only.) 
The reason to select the canonical metric is that for 
$v$ in the unit sphere $S(\C^n)$ of $\C^n$, the orbit map 
$\ug(n) \to S(\C^n),\, Q\mapsto Qv$ 
is a Riemannian submersion 
(which is easy to check).  
This implies $\vol(\ug(n)) = \vol(\ug(n-1))\, \vol(S(\C^n))$. 
We denote by $S^{m-1} := \{ x\in \R^{m} \mid \|x\| = 1 \}$
the unit sphere of $\R^{m}$.
Using $\vol(S(\C^n))= \vol(S^{2n-1}) = 2\pi^n/\Gamma(n)$, 
we obtain from this 
$\vol(\ug(n)) = 2^n \pi^{\frac{n(n+1)}{2}}/\suf(n-1)$,
where the {\em superfactorial} of $k\in\N$ is defined as 
$\suf(k) := \prod_{i=1}^k i!$ for $k\ge 1$ and $\suf(0) :=1$.  

\subsection{Complex Grassmann manifolds}
\label{se:Grass-conv}

For the following compare~\cite{eas:99}. 
Let $0 < m < n$. 
The {\em complex Grassmann manifold} $\Grass(m,\C^n)$ 
consists of the $m$-dimensional complex linear subspaces of $\C^n$. 
The unitary group $\ug(n)$ 
acts transitively on $\Grass(m,\C^n)$,
and we have a surjective and $\ug(n)$-equivariant mapping 
\begin{equation}\label{eq:pi-map}
 \pi\colon \ug(n) \to \Grass(m,\C^n),\ Q \mapsto [Q] := \spann\{q_1,\ldots,q_{m}\} ,
\end{equation}
where $q_i$ denote the columns of $Q$. 
We can thus interpret $\Grass(m,\C^n)$ as the quotient of $\ug(n)$ by the subgroup $\ug(m)\times\ug(n-m)$. 
For our puposes, this is the most effective way to operate with the Grassmann manifolds.

We identify the tangent space of $\Grass(m,\C^n)$ at $[I_n]$ with the orthogonal complement of 
the kernel of the derivative of~$\pi$ at $I_n$, which consists of the matrices
\begin{equation}\label{eq:TS-Gr}
\begin{bmatrix}
0 & -R^* \\
R & 0
\end{bmatrix}, \quad R \in \C^{(n-m)\times m} .
\end{equation}
We define a hermitian inner product on this space by 
\begin{equation}\label{eq:def-hip}
\Big\langle 
\begin{bmatrix}
0 & -R^* \\
R & 0
\end{bmatrix},
\begin{bmatrix}
0 & -S^* \\
S & 0
\end{bmatrix}
\Big\rangle 
 := \langle R, S \rangle := \mathrm{tr}(RS^*).
\end{equation}
By requiring that $\ug(n)$ acts in a unitarily invariant way, 
we get a hermitian metric on the tangent bundle of $\Grass(m,\C^n)$. 
It is easy to check that 
$\pi\colon\ug(n) \to \Grass(m,\C^n)$
is a Riemannian submersion. 
This implies
$\vol(\Grass(m,\C^n)) = \vol(\ug(n)/(\vol(\ug(m))\vol(\ug(n-m)))$.

\subsection{Principal angles between subspaces}
\label{se:principal-angles}

We are interested in characterizing the relative position of two subspaces 
and therefore study the orbits of the simultaneous action 
of $\ug(n)$ on $\Grass(m_1,\C^n)\times\Grass(m_2,\C^n)$, 
where $0\le m_1,m_2\le n$.
The principal angles, 
first introduced by  Jordan~\cite{jordan:1875} for subspaces of~$\R^n$, 
completely characterize the relative position of two subspaces. 
(For complex vector spaces, e.g., see~\cite{galhe:06}.)
The singular value decomposition and the related CS decomposition of matrices 
(cf.~\cite{paige-wei:94}) allow a short and elegant treatment.

Consider two complex linear subspaces $V_1$ and $V_2$ of $\C^n$ 
with the dimensions $m_1$ and~$m_2$, respectively,
and put $r:=\min\{m_1,m_2\}$.  
For $i=1,2$ choose an orthonormal basis of $V_i$ and form the matrix 
$U_i\in\C^{n\times m_i}$, whose columns consist of the basis vectors. 
Let $\s_1,\ldots,\s_r$ denote the singular values of the matrix product 
$U_1^*U_2\in\C^{m_1\times m_2}$. 
Note that 
$0\le\s_j\le \|U_1^*U_2\| \le 1$. 
We call $\theta_j := \arccos\s_j$, for $j=1,\ldots,r$,  
the {\em principal angles between the subspaces $V_1$ and $V_2$}. 
It is immediate to check from the unitary invariance of the singular 
values that this definition is independent of the choice of the orthonormal bases 
of $V_1$ and $V_2$. Moreover, the principal angles between $V_2$ and $V_1$ 
are the same as those between $V_1$ and $V_2$. It follows from the definition that 
if $Q\in\ug(n)$ is unitary, then the principal angles between $Q(V_1)$ and $Q(V_2)$ 
are the same as those between $V_1$ and $V_2$. 
It is known that the converse is true as well, so that the principal angles completely characterize 
the relative position of two subspaces.

\begin{lemma}\label{le:pr-ang-red}
Let $d:=\dim V_1\cap V_2$ and let $V'_i$ denote the orthogonal complement of $V_1\cap V_2$ in $V_i$. 
Then the list of principal angles between $V_1$ and $V_2$ is obtained from 
the list of principal angles between $V'_1$ and $V'_2$ by appending the angle~$0$ with multiplicity~$d$. 
\end{lemma}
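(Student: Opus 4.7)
The plan is to choose orthonormal bases adapted to the orthogonal decomposition $V_i = (V_1\cap V_2) \oplus V'_i$ and read off the singular values of the Gram matrix $U_1^*U_2$ from an obvious block decomposition. Concretely, I would pick an orthonormal basis $w_1,\ldots,w_d$ of the intersection $W := V_1\cap V_2$, assemble the columns into $W\in\C^{n\times d}$, and extend to orthonormal bases of $V_1$ and $V_2$ by choosing orthonormal bases of $V'_1$ and $V'_2$, with matrices $U'_i\in\C^{n\times(m_i-d)}$. The full matrices $U_i := [\,W \mid U'_i\,]\in\C^{n\times m_i}$ then have orthonormal columns that span $V_i$, so they are admissible choices in the definition of the principal angles.

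Next I would compute the block form
$$
U_1^*U_2 \;=\; \begin{bmatrix} W^* \\ (U'_1)^* \end{bmatrix}\begin{bmatrix} W & U'_2 \end{bmatrix}
\;=\;\begin{bmatrix} W^*W & W^*U'_2 \\ (U'_1)^*W & (U'_1)^*U'_2 \end{bmatrix}
\;=\;\begin{bmatrix} I_d & 0 \\ 0 & (U'_1)^*U'_2 \end{bmatrix}.
$$
The off-diagonal blocks vanish because the columns of $W$ lie in $W=V_1\cap V_2$ while the columns of $U'_i$ lie in $V'_i$, which is by definition the orthogonal complement of $W$ inside $V_i$ (and hence orthogonal to $W$ in $\C^n$). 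The diagonal block $W^*W=I_d$ is immediate from orthonormality.

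The singular values of a block-diagonal matrix are the union (with multiplicity) of the singular values of the blocks, so the singular values of $U_1^*U_2$ consist of $1$ with multiplicity $d$, together with the singular values of $(U'_1)^*U'_2$. Applying $\arccos$ and recalling that $\arccos 1 = 0$, the principal angles between $V_1$ and $V_2$ are therefore exactly the principal angles between $V'_1$ and $V'_2$ together with $0$ repeated $d$ times. This matches the count, since $\min\{m_1,m_2\} = \min\{m_1-d,m_2-d\} + d$.

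There is no real obstacle: the only point requiring a line of justification is the vanishing of the off-diagonal blocks, i.e.\ $W^*U'_i = 0$, which is precisely the definition of $V'_i$ as the orthogonal complement of $V_1\cap V_2$ in $V_i$. The unitary-invariance remark in the paragraph preceding the lemma ensures that the choice of orthonormal bases is immaterial.
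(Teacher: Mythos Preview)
Your proof is correct and follows essentially the same approach as the paper: choose orthonormal bases compatible with the decomposition $V_i=(V_1\cap V_2)\oplus V'_i$, observe that $U_1^*U_2$ is block diagonal with blocks $I_d$ and $(U'_1)^*U'_2$, and read off the singular values. The paper's proof is just a terser version of yours (with the blocks in the opposite order, which is immaterial).
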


\begin{proof}
For $i=1,2$, let $B_i$ be a matrix whose columns form an orthonormal basis of $V'_i$. 
Choose an orthonormal basis of $V_1\cap V_2$ and append it to the chosen bases of $V'_i$, 
obtaining the matrix~$U_i$. 
Then $U_1^*U_2$ has the block diagonal form $\diag(B_1^*B_2,I_d)$, which shows the assertion.
\end{proof}

\section{Proofs of characterizations of condition numbers}
\label{se:proofs_CN}

\subsection{Intrinsic versus kernel intersection condition numbers} 
\label{se:kerimcondkappa}

We provide here the proof of Theorem~\ref{pro:kckA}. 
Let $m+s=n$. Taking the orthogonal complement defines 
an isometry, cf.~\cite[eq.~(9)]{amel-pbuerg:11a}, 
\begin{equation}\label{eq:OC}
 \Grass' :=\Grass(m,\C^{n+1}) \to \Grass := \Grass(s+1,\C^{n+1}) ,\ W\mapsto W^\perp .
\end{equation}
Let $\cM\subseteq\C^{(n+1) \ti m}$ denote the open set of matrices of rank~$m$, 
endowed with the Frobenius norm.
We consider the smooth maps 
$$
\im\colon\cM\to\Grass',\, B\mapsto \im B ,\quad 
 \ker\colon \cM^T\to\Grass,\, A\mapsto \ker A .
$$
We study the operator norm $\|D_{B}\im\|$ of the derivative 
$D_B\im\colon T_B\cM \to T_{\im B} \Grass'$, defined 
with respect to the Frobenius norm on 
$T_B\cM= \C^{(n+1)\times m}$ and the norm on $T_{\im B} \Grass'$
defined in~\eqref{eq:TS-Gr}.  
The operator norm $\|D_A\ker\|$ is defined similarly. 

Since $(\im B)^\perp = \ker B^T$ for $B\in\cM$, 
we have the commutative diagram
\begin{equation}\label{eq:Mdiagram}
\begin{array}{ccl}
 \cM& \stackrel{\im}{\to }& \Grass' \\
   \downarrow &                         &  \downarrow\\
\cM^T & \stackrel{\ker}{\to} &   \Grass 
\end{array} ,
\end{equation}
where the vertical arrows are given by 
$B\mapsto A=B^T$ and $W\mapsto W^\perp$. 
This commutative diagram implies that the derivatives 
$D_{B}\im$ and $D_{A}\ker$ have the same operator norm. 

We shall identify $T_B\cM$  and $T_A\cM^T$ with $\cM$ and $\cM^T$, respectively. 
Recall that $B^\dagger$ denotes the Moore-Penrose inverse of $B$ 
and $\kappa(B) := \|B\|\| B^\dagger\|$. 
We have $\kappa(B) = \kappa(A)$. 

\begin{lemma}\label{le:diego}
Let $B\in\cM$ and $L'=\im(B)$. 
For all $\dot{B}\in \cM$  we have
$$
  \|D_B\im(\dot{B})\| \ \le\ \|B^\dagger\|\cdot \|\dot{B}\|_F,
$$ 
and there exists $\dot{B}\ne 0$ such that equality holds. 
Moreover, there is a linear subspace $\cN\subseteq \cM$ such that 
the restriction of $D_B \im$ to $\cN$ provides a bijection $\cN\to T_{L'}\Grass'$,  
and for all $\dot{B}\in \cN$: 
$$
\frac{1}{\|B\|} \,\|\dot{B}\|_F \ \le\ \|D_B \im (\dot{B})\| .
$$
Similarly, for $A=B^T$, $L=\ker(A)$, and $\dot{A}=\dot{B}^T$,  
we have 
$\|D_A \ker (\dot{A})\| \ \le\ \|A^\dagger\|\cdot \|\dot{A}\|_F$ 
and there is $\dot{A}\ne 0$ such that equality holds. 
Moreover, we have 
$\|\dot{A}\|_F / \|A\| \le  \|D_A \ker (\dot{A})\|$ 
for all $\dot{A} \in\cN^T$. 
\end{lemma}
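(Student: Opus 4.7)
The plan is to compute $D_B\im$ explicitly and read off all four assertions from the resulting formula, then deduce the statements about $\ker$ via the commutative diagram~\eqref{eq:Mdiagram}. To obtain the formula, I would parametrize a neighborhood of $L'=\im B$ in $\Grass'$ by $\phi\in\Hom(L',(L')^\perp)\mapsto(I+\phi)L'$, and observe that in this chart the inner product~\eqref{eq:def-hip} on $T_{L'}\Grass'$ is exactly the Hilbert--Schmidt norm of $\phi$. Writing $\im(B+t\dot B)=(I+\phi(t))L'$ with $\phi(0)=0$, the defining relation $P_{(L')^\perp}(B+t\dot B)u=\phi(t)(P_{L'}(B+t\dot B)u)$ on $\C^m$, differentiated at $t=0$ and combined with $P_{L'}B=B$ and $B^\dagger B=I_m$, yields
$$
D_B\im(\dot B)\;=\;P_{(L')^\perp}\,\dot B\,B^\dagger\;\in\;\Hom(L',(L')^\perp).
$$

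The upper bound is immediate from this formula: $\|D_B\im(\dot B)\|=\|P_{(L')^\perp}\dot B B^\dagger\|_F\le\|\dot B B^\dagger\|_F\le\|B^\dagger\|\cdot\|\dot B\|_F$, using $\|P_{(L')^\perp}\|\le 1$ and the submultiplicativity $\|XY\|_F\le\|X\|_F\|Y\|$. Sharpness will be exhibited by a rank-one perturbation built from the SVD $B=\sum_j\sigma_j u_j v_j^*$ (with $\sigma_m=1/\|B^\dagger\|$): pick any unit vector $u_{m+1}\in(L')^\perp$ and set $\dot B:=u_{m+1}v_m^*$. A direct calculation gives $\|\dot B\|_F=1$ and $\dot B B^\dagger=\sigma_m^{-1}u_{m+1}u_m^*$, which already lies in $(L')^\perp$ and has Frobenius norm $\|B^\dagger\|$.

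For the subspace $\cN$ and the lower bound, I would note that the explicit formula yields $\ker D_B\im=\{\dot B:\im\dot B\subseteq L'\}$, whose Frobenius-orthogonal complement is $\cN:=\{\dot B:\im\dot B\subseteq(L')^\perp\}$, of dimension $m(n+1-m)=\dim T_{L'}\Grass'$. For $\dot B\in\cN$ one has $P_{(L')^\perp}\dot B=\dot B$, so $D_B\im(\dot B)=\dot B B^\dagger$; the identity $\dot B=(\dot B B^\dagger)B$ then gives $\|\dot B\|_F\le\|\dot B B^\dagger\|_F\cdot\|B\|$, i.e.\ the claimed lower bound. Injectivity on $\cN$ (since $\cN\cap\ker D_B\im=0$) together with the dimension count forces $D_B\im|_\cN$ to be a linear bijection onto $T_{L'}\Grass'$.

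Finally, the statements for $\ker$ are formal consequences of those just proved for $\im$: the diagram~\eqref{eq:Mdiagram} realizes $\ker$ as $\im$ conjugated by two isometries, namely transposition $(\cM,\|\cdot\|_F)\to(\cM^T,\|\cdot\|_F)$ (which preserves both the spectral norm and the Moore--Penrose inverse, so $\|A\|=\|B\|$ and $\|A^\dagger\|=\|B^\dagger\|$) and orthogonal complementation $\Grass'\to\Grass$ (an isometry by~\eqref{eq:OC}). All four inequalities therefore transfer verbatim, with $\cN^T$ playing the role of $\cN$. The only delicate point in the whole argument---and the main obstacle---is the precise identification of the intrinsic hermitian norm~\eqref{eq:def-hip} on $T_{L'}\Grass'$ with the Hilbert--Schmidt norm on $\Hom(L',(L')^\perp)$ in the local chart $\phi\mapsto(I+\phi)L'$, so that the constants $\|B^\dagger\|$ and $1/\|B\|$ come out sharp. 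Once this identification is in hand, everything else reduces to elementary matrix algebra.
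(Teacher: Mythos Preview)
Your argument is correct. The route differs from the paper's: instead of deriving the closed formula $D_B\im(\dot B)=P_{(L')^\perp}\dot B\,B^\dagger|_{L'}$ via the graph chart $\phi\mapsto(I+\phi)L'$, the paper first reduces by the singular value decomposition $B=UDV^*$ to the case $B=D=\begin{bmatrix}\Delta\\0\end{bmatrix}$, then factors $D_D\im$ as the right multiplication $\dot B\mapsto\dot B\Delta^{-1}$ followed by the projection $\begin{bmatrix}S\\R\end{bmatrix}\mapsto R$, and reads off the two-sided bounds from $\sigma_1^{-1}\|\dot B\|_F\le\|\dot B\Delta^{-1}\|_F\le\sigma_m^{-1}\|\dot B\|_F$. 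Both approaches produce the same subspace $\cN$ (in the reduced frame, your condition $\im\dot B\subseteq(L')^\perp$ is exactly $S=0$). Your coordinate-free formula is more conceptual and makes the sharpness witness and the kernel description immediate; the paper's SVD reduction avoids having to argue the identification of the metric~\eqref{eq:def-hip} with the Hilbert--Schmidt norm in the graph chart, since after reduction one is computing directly in the model~\eqref{eq:TS-Gr}. The ``delicate point'' you flag is genuine but standard: it follows from unitary invariance of the metric together with the fact that at $[I]$ the identification~\eqref{eq:TS-Gr}--\eqref{eq:def-hip} \emph{is} the Hilbert--Schmidt norm on $\Hom(\C^m,\C^{n+1-m})$.
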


\begin{proof}Due to the commutative diagram~\eqref{eq:Mdiagram}, it suffices to
show the assertion about $D_B\im$.
Let $\s_1\ge\ldots\ge\s_m>0$ be the singular values of $B$ and 
$B=UDV^*$ be the singular value decomposition of $B$. 
So $U\in\ug(n+1)$, $V\in\ug(m)$, and $D$ is obtained from 
the diagonal matrix $\Delta:=\diag(\s_1,\ldots,\s_m)$
by appending $s+1$ zero rows. 
Consider the commutative diagram
\begin{equation*}
\begin{array}{rcl}
 \cM   & \stackrel{\im}{\to }& \Grass' \\
   \downarrow &                         &  \downarrow\\
\cM & \stackrel{\im}{\to} &   \Grass 
\end{array} ,
\end{equation*}
where the left vertical arrow is given by 
$B'\mapsto U^*B'V$ and the right vertical arrow is induced by~$U^*$. 
Both vertical arrows are isometries. 
Therefore, for showing the assertion about $D_B\im$, 
we can assume that $B=D$ without loss of generality. 

The matrix $\tilde{I} := I_{n+1,m} = D\Delta^{-1}$ is 
obtained from the unit matrix $I_{m}$ by appending $s+1$ zero rows. 
We can write $D_B\im\colon\cM \to T_L\Grass'$ 
as the composition of $D_{\tilde{I}}\im\colon\cM\to T_L\Grass'$ 
with $\cM\to\cM,\, \dot{B}\mapsto \dot{B}\Delta^{-1}$. 
Note that for all $\dot{B}\in\cM$
\begin{equation}\label{eq:nono}
  \s_1^{-1}\| \dot{B}\|_F \ \le\  \|\dot{B}\Delta^{-1}\|_F \ \le\ \s_m^{-1}\| \dot{B}\|_F .
\end{equation}
According to Section~\ref{se:Grass-conv},  
the derivative $D_{\tilde{I}}\im$ can be interpreted as the orthogonal projection 
$\begin{bmatrix} S \\ R\end{bmatrix}\mapsto R$, 
where $S\in\C^{m\times m}$ and $R\in\C^{(s+1)\times m}$. 
Hence, $\|D_{\tilde{I}}\im(\dot{B})\| \le \|\dot{B}\|_F$ for all
$\dot{B}\in\cM$. 
Moreover, equality holds if $\dot{B}$ lies in the subspace $\cN$ 
defined by $S=0$. 
Combining these insights with \eqref{eq:nono}, the assertion follows.
\end{proof}

The first part of Lemma~\ref{le:diego} implies that 
$\|D_{B}\im\| = \|D_{B^T}\ker\| = \|B^\dagger\|$.
This means that the absolute condition number of the maps $\im$ and $\ker$ 
is given by the matrix condition number $\kappa(B)$; 
compare~\cite[\S14.3]{condition}. 
In the corresponding situation of real matrices,  
$\|D_A\ker\| = \|A^\dagger\|$ was first shown by Armentano~\cite{Arm:10}; 
see also \cite[\S14.3.2]{condition}.

\begin{proof}[Proof of Theorem~\ref{pro:kckA}]
Suppose we are in the situation of Section~\ref{se:intrinsic}. 
So $L\in\Grass$ intersects $Z$ transversally at~$z$ and 
we have open neighborhoods $U\subseteq\Grass$ of $L$, 
$V\subseteq Z$ of $z$, and a smooth solution map 
$\g\colon U \to V$ such that 
$\tilde{L}\cap V = \{\g(\tilde{L})\}$ 
for all $\tilde{L}\in U$. By Definition~\ref{def:condG}, we have 
$\kappa_Z(L,z) = \|D_L\g\|$. 

Composing the maps $\ker$ and~$\g$, we obtain the solution map
$G\colon\ker^{-1}(U) \to V,\, A\mapsto (\g\circ \ker)(A)$. 
Suppose that $L=\ker(A)$. 
By Definition~\ref{def:condA}, 
the submultiplicativity of the operator norm, 
and Lemma~\ref{le:diego}, we obtain 
$$
  \kercond_Z(A,z)  = \|A\| \cdot\| D_AG\| \ \le\ 
 \|A\| \cdot \| D_L\g\| \cdot \| D_A \ker\| 
  = \kappa(A)\cdot \kappa_Z(L,z) .
$$ 
This shows the upper bound stated in Theorem~\ref{pro:kckA}. 
The lower bound follows by combining Lemma~\ref{le:diego} 
with Lemma~\ref{le:subsp-contr} below. 
\end{proof}

\begin{lemma}\label{le:subsp-contr}
Let $\varphi\colon U \to V$ and $\psi\colon V\to W$ be linear 
maps between finite dimensional hermitian vector spaces. 
Suppose that $U'\subseteq U$ is a linear subspace such that 
the  restriction $U'\to V$ of~$\varphi$ is surjective and there is 
$\lambda>0$ such that for all $u\in U'$ we have
$\lambda \|u\| \le \|\varphi(u)\|$.
Then $\lambda \|\psi\| \le \|\psi \circ\varphi\|$. 
\end{lemma}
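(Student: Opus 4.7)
The plan is to unpack the operator norm $\|\psi\|$ pointwise and use the hypothesis on $\varphi|_{U'}$ to pull each vector of $V$ back to a controllably small vector in $U'$. Fix an arbitrary $v\in V$ with $\|v\|=1$. By surjectivity of the restriction $\varphi|_{U'}\colon U'\to V$, there is some $u\in U'$ with $\varphi(u)=v$. The lower bound $\lambda\|u\|\le\|\varphi(u)\|=1$ forces $\|u\|\le 1/\lambda$ (this also shows that $\varphi|_{U'}$ is injective, hence a linear bijection onto $V$, so $u$ is actually unique, but we do not need this).

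Next I would apply the definition of the operator norm of $\psi\circ\varphi$ to this particular $u$:
\[
 \|\psi(v)\| \;=\; \|\psi(\varphi(u))\| \;=\; \|(\psi\circ\varphi)(u)\|
 \;\le\; \|\psi\circ\varphi\|\cdot\|u\| \;\le\; \frac{\|\psi\circ\varphi\|}{\lambda}.
\]
Taking the supremum over all unit vectors $v\in V$ yields $\|\psi\|\le \|\psi\circ\varphi\|/\lambda$, which is the desired inequality.

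There is no real obstacle here; the statement is essentially the observation that if $\varphi$ restricted to some subspace is a surjection that distorts norms by at most a factor $1/\lambda$, then it has a right inverse (onto $V$) of operator norm at most $1/\lambda$, and post-composing by $\psi$ then gives $\|\psi\|\le\lambda^{-1}\|\psi\circ\varphi\|$. The only thing worth being careful about is to apply the hypothesis \emph{only} to vectors in $U'$ (since $\varphi$ need not satisfy the lower bound on all of $U$), which is automatic in the argument above because the chosen preimage $u$ lies in $U'$ by construction.
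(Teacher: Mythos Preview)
Your proof is correct and follows essentially the same approach as the paper: pick a vector $v\in V$, lift it to $u\in U'$ via the surjectivity of $\varphi|_{U'}$, use the lower bound $\lambda\|u\|\le\|\varphi(u)\|$ to control $\|u\|$, and compare with $\|\psi\circ\varphi\|$. The only cosmetic difference is that the paper selects a single $v$ attaining $\|\psi(v)\|=\|\psi\|\,\|v\|$ (available by finite dimensionality), whereas you argue with an arbitrary unit vector and then take the supremum; both routes are equivalent here.
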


\begin{proof}
Let $v\ne 0$ be such that $\|\psi(v)\| = \|\psi\| \|v\|$. 
By assumption, there exists $u\in U'$ such that $\varphi(u) =v$. Then 
$\lambda \|u\| \le \|\varphi(u)\| =\|v\|$. Hence 
$\lambda \|\psi\| \|u\| \le \|\psi\| \|v\| = \|\psi(v)\| = \|\psi(\varphi(u))\|$. 
\end{proof}
\medskip

We may also represent $L\in\Grass$ as the image of a full rank matrix $B\in\C^{(n+1)\ti (s+1)}$.
When doing so, we can analogously define an {\em image intersection condition number} 
\begin{equation}\label{eq:def_imcond}
 \imcond_Z(B,z) := \|B\|  \cdot \|D_B \Gamma\| ,
\end{equation}
where $\Gamma$ denotes the locally defined solution map sending matrices $\tilde{B}$ 
close to $B$ to the unique intersection point of $Z \cap\proj(\im\tilde{B})$ close to $z$. 
The same arguments as above then show that 
\begin{equation}\label{eq:imcond}
 \kappa_Z(L,z) \le \imcond_Z(B,z) \le \kappa(B)\cdot\kappa_Z(L,z) .
\end{equation}

\subsection{Angular characterization of intersection condition numbers}
\label{se:ang_char}

We provide the proof of Theorem~\ref{th:kappa-alpha} by proceeding in several steps. 
Let $Z\subseteq\proj^n$ be an irreducible projective variety of dimension~$m$ 
and $z$ be a smooth point of~$Z$. Hence there are homogeneous polynomials
$f_1,\ldots,f_s$ vanishing on $Z$ such that the Jacobian of $f_1,\ldots,f_s$ at $z$ 
has the rank $s=n-m$. Then, in an open neighborhood of $z$, the variety~$Z$ 
equals the zero set of $f_1,\ldots,f_s$; cf.~\cite[Cor.~(1.20)]{mumford}. 
It will be convenient to use the coordinate functions 
$T,X_1,\ldots,X_s,Y_1,\ldots,Y_m$.
By unitary invariance, we may assume without loss of generality that 
$L$ is the zero set of $Y_1,\ldots,Y_m$ and 
$z=(1 \colon 0\colon\ldots\colon 0)$. 
We represent $z$ by the affine point $\z=(1, 0,\ldots,0)$.  

The tangent space $T_z Z$ consists of the vectors 
$(0,\dot{x},\dot{y})\in\C^{n+1}$ such that 
$$
 \partial_X f (\z)\, \dot{x} + \partial_Y f (\z)\, \dot{y} = 0 ,
$$ 
since $\partial_{T} f(\z)=0$ by Euler's relation. 
We may assume without loss of generality 
that $Z$ and $L$ meet transversally at~$z$. 
This means that the matrix $\partial_X f(\z) \in\R^{s\ti s}$ is invertible.
Setting $N:=-(\partial_X f(\z))^{-1}\partial_Y f(\z)$ 
we conclude that 
\begin{equation}\label{eq:tang-space-Z}
 T_{z}Z =\{ (0,N\dot{y},\dot{y}) \mid \dot{y} \in\C^m\} . 
\end{equation}

\begin{lemma}\label{le:angle-tangspace}
The minimum angle $\a$ between $T_{z}Z$ and $T_{z}L$
satisfies $\cot(\a) = \|N\|$. 
In particular, $\sin(\a)= (1+\|N\|^2)^{-\frac12}$. 
\end{lemma}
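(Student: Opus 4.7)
The plan is to compute the angle directly from the coordinate description of the two tangent spaces. Since $L$ is cut out by $Y_1,\ldots,Y_m$ and $z$ is represented by $\z=(1,0,\ldots,0)$, the tangent model $T_z\proj^n=\{(0,\dot x,\dot y)\mid\dot x\in\C^s,\dot y\in\C^m\}$ restricts to $T_zL=\{(0,\dot x,0)\mid\dot x\in\C^s\}$, while~\eqref{eq:tang-space-Z} gives $T_zZ=\{(0,N\dot y,\dot y)\mid\dot y\in\C^m\}$. Because $\|\z\|=1$, the Fubini-Study inner product coincides with the standard one on these tangent vectors.

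Next I would unfold the definition~\eqref{eq:def-angle} of $\ang_{\min}$. For arbitrary nonzero $v_1=(0,\dot x,0)\in T_zL$ and $v_2=(0,N\dot y,\dot y)\in T_zZ$, one has
$$
\frac{|\langle v_1,v_2\rangle|}{\|v_1\|\,\|v_2\|}
\;=\;\frac{|\langle\dot x,N\dot y\rangle|}{\|\dot x\|\sqrt{\|N\dot y\|^2+\|\dot y\|^2}}.
$$
By Cauchy--Schwarz, maximizing over $\dot x\neq 0$ (with $\dot y$ fixed, and $N\dot y\ne 0$; the case $N\dot y=0$ gives ratio $0$ and is not optimal) yields $\dot x\parallel N\dot y$ and the value $\|N\dot y\|/\sqrt{\|N\dot y\|^2+\|\dot y\|^2}$.

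Setting $t:=\|N\dot y\|/\|\dot y\|\ge 0$, this becomes $t/\sqrt{1+t^2}$, which is strictly increasing in $t$; hence the supremum over $\dot y\ne 0$ is attained at $t=\|N\|$ (the spectral norm, realized by a top right singular vector of $N$). Therefore
$$
\cos\a\;=\;\frac{\|N\|}{\sqrt{1+\|N\|^2}},\qquad
\sin\a\;=\;\frac{1}{\sqrt{1+\|N\|^2}},\qquad
\cot\a\;=\;\|N\|,
$$
which is the claim. I do not expect any genuine obstacle: the computation is a straightforward two-stage optimization, and the only thing to be careful about is handling the degenerate case $N\dot y=0$ separately so as not to divide by zero before invoking the monotonicity argument.
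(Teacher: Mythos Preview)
Your proof is correct and takes a different, more elementary route than the paper. The paper proceeds via the singular value decomposition $N=UDV^*$: it applies the unitary $(x,y)\mapsto(U^*x,V^*y)$ to reduce to the diagonal case, then builds explicit orthonormal bases of the two subspaces, forms the matrix of inner products, and reads off its spectral norm as $\sigma_1/\sqrt{1+\sigma_1^2}$ (with a case distinction $s\le m$ versus $s>m$). You instead work directly from the variational characterization~\eqref{eq:def-angle} of $\ang_{\min}$ and perform a two-stage optimization: Cauchy--Schwarz in $\dot x$, then monotonicity of $t\mapsto t/\sqrt{1+t^2}$ in $\dot y$. Your approach avoids the SVD machinery and the case split on $\min(s,m)$, and is shorter; the paper's approach has the side benefit of exhibiting all the principal angles $\arccos\big(\sigma_i/\sqrt{1+\sigma_i^2}\big)$ between the two tangent spaces, not only the smallest one, though that extra information is not used here. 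One tiny caveat: if $N=0$ then every $\dot y$ satisfies $N\dot y=0$, so the phrase ``is not optimal'' is vacuous, but your conclusion $\cos\a=0=\|N\|/\sqrt{1+\|N\|^2}$ still holds trivially in that case.
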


\begin{proof}
By~\eqref{eq:tang-space-Z} we have 
$T_{z} Z \simeq\{ (N\dot{y},\dot{y}) \mid \dot{y} \in\C^m\}$
(omitting the first component, which is always zero). 
Moreover, $T_{z} L \simeq \C^s\ti \{0\}^m$. 

Suppose first $s\le m$ and consider the singular value decomposition $N=UDV^*$, 
where $U\in\ug(s),V\in\ug(m)$, and $D$ is the diagonal matrix 
with entries $\s_1\ge\cdots\ge\s_s$, with $m-s$ zero columns appended. 
The $\s_i$ are the singular values of $N$ and $\|N\|=\s_1$. 
The unitary map $(x,y)\mapsto (U^*x,V^*y)$ sends $T_{z}Z$ to 
$W_1:=\{ (D\dot{y},\dot{y}) \mid \dot{y} \in\C^m\}$ and leaves  
$W_2:=\C^s\ti \{0\}^m$ invariant. 
Hence the minimum angle $\a$ between $T_{z}Z$ and $T_{z}L$
equals the minimum angle between $W_1$ and $W_2$. 
We next compute this angle. 

Let $\Delta\in\R^{m\ti m}$ denote the diagonal matrix with the 
entries $(1+\s_1^2)^{-\frac12},\ldots,(1+\s_s^2)^{-\frac12},1,\ldots,1$. 
Then the columns of 
$\begin{bmatrix}
D\Delta\\
\Delta
\end{bmatrix}$
and 
$\begin{bmatrix}
I_s\\
0
\end{bmatrix}$ 
form an orthonormal basis of $W_1$ and $W_2$, respectively. 
By the description of the principal angles in Section~\ref{se:geo-char}, 
the minimum angle between $W_1$ and $W_2$ is obtained as the arccosine 
of the spectral norm of 
$\begin{bmatrix}
I_s & 0 
\end{bmatrix}
\cdot 
\begin{bmatrix}
D\Delta\\
\Delta
\end{bmatrix}
=D\Delta$.
It is easily checked that $\frac{\s_1}{\sqrt{1+\s_1^2}}$ 
is the largest among the $\frac{\s_i}{\sqrt{1+\s_i^2}}$. 
Therefore, $\cos\a= \frac{\s_1}{\sqrt{1+\s_1^2}}$, 
which implies the assertion $\cot\a=\s_1$. 

The case $s>m$ is treated analogously. 
The second assertion follows using $\sqrt{1+\cot(\a)^2}=(\sin(\a))^{-1}$.
\end{proof}

We note that the minimum angle $\a$ is positive iff $Z$ and $L$ 
meet transversally at $z$. We can therefore assume $\a>0$. 
According to Lemma~\ref{le:angle-tangspace}, 
we need to show that $\kappa_Z(L,z) = \sqrt{1+\|N\|^2}$
in order to complete the proof of Theorem~\ref{th:kappa-alpha}.

\begin{proof}[Proof of Theorem~\ref{th:kappa-alpha}]
We go back to the setting of Section~\ref{se:intrinsic},
put $\Grass := \Grass(s,\proj^n)$, $\ug := \ug(n+1)$,  
and consider the solution map $\g\colon \Grass\supseteq U\to V\subseteq\proj^n$ 
at $(L,z)$, cf.~\eqref{eq:gamma}, where
$L=\C^{s+1} \times\{0\}^m$ and $z=(1:0:\ldots : 0)$. 
Composing $\g$ with the map $\pi\colon\ug\to\Grass$,
defined analogously as in~\eqref{eq:pi-map}, 
we obtain the lifted solution map 
$\tilde{\g}\colon \tilde{U} \to Z$ defined on the open neighborhood 
$\tilde{U}:=\pi^{-1}(U)$ of $I :=I_{n+1}$.  
Note that $\tilde{\g}(I) = z$. 
By our definition of the Riemannian metric on $\Grass$, the derivative 
$D_I\pi$ of $\pi$ maps the unit ball in $T_I\ug$ onto the unit ball in $T_{L}\Grass$.
Hence, Definition~\ref{def:condG} implies that 
\begin{equation}\label{eq:char-cond-lifted}
 \kappa_Z(L,z) =  \| D_L \g\| = \|D_I \tilde{\g} \| . 
\end{equation}

We suppose now that $t\mapsto Q(t)$ is a smooth curve in $\ug$ such that $Q(0)=I$ 
and denote by $\dot{Q}$ its derivative at $t=0$. 
We write $Q(t)$ as a $3\ti 3$ block matrix according to the decomposition $n+1=1+s+m$, 
$$
 Q(t)=\begin{bmatrix} Q_{00}(t) & Q_{01}(t) & Q_{02}(t) \\
      Q_{10}(t) & Q_{11}(t) & Q_{12}(t) \\ Q_{20}(t) & Q_{21}(t) & Q_{22}(t)\end{bmatrix} ,
$$
where 
$Q_{00}(t)\in \C$, $Q_{01}(t)\in\C^{1\ti s}$, $Q_{02}(t)\in\C^{1\ti m}$,
$Q_{10}(t) \in \C^{s\ti 1}$, $Q_{11}(t) \in \C^{s\ti s}$, $Q_{12}(t) \in \C^{s\ti m}$,
$Q_{20}(t) \in \C^{m\ti 1}$, $Q_{21}(t) \in \C^{m\ti s}$, $Q_{22}(t) \in \C^{m\ti m}$.
Moreover, we write $Q_{ij} := Q_{ij}(0)$ and denote by $\dot{Q}_{ij}$ the 
derivative of $\dot{Q}_{ij}(t)$ at $t=0$. 
Note that $\dot{Q}_{00} \in \R i$. 
This defines the curve $t\mapsto L(t):=\pi(Q(t))$ in $\Grass$, 
where $L(t)$ is the span of the first $s+1$ columns of $Q(t)$. 

Let $\Sph := S(\C^{n+1})$ denote the unit sphere of $\C^{n+1}$ 
and consider the canonical map 
$p\colon \Sph\to \proj^n$.
By lifting the intersection point 
$z(t) = \tilde{\g}(Q(t)) \in Z \cap L(t)$, 
we get a smooth function $t\mapsto \z(t) \in \Sph$
with 
$z(t) = p(\z(t))$ and $\z(0) = \z= (1,0,\ldots,0)$. 
(Note that representatives in $\Sph$ of points in $\proj^n$ are only 
determined up to a complex scalar of absolute value one.) 
We can write 
\begin{equation}\label{eq:def-z}
 \z(t) = 
\begin{bmatrix} 
\z_0(t)\\
\z_1(t)\\
\z_2(t)
\end{bmatrix}
= 
\begin{bmatrix} 
Q_{00}(t) & Q_{01}(t) \\
Q_{10}(t) & Q_{11}(t) \\
Q_{20}(t) & Q_{21}(t) 
\end{bmatrix} 
\begin{bmatrix} 
u(t) \\
v(t)
\end{bmatrix} 
\end{equation}
for smooth functions $u(t)\in\C$ and $v(t)\in\C^{s}$ of~$t$.
Moreover, $\z_0(0)=1$, $\z_1(0)=0$, $\z_2(0)=0$ and 
$u(0) = 1$, $v(0)=0$.
Let us write $\dot{\z}_i$ for the derivative of $\z_i(t)$ at $t=0$. 
Taking derivatives in~\eqref{eq:def-z}, we get 
\begin{equation}
\begin{bmatrix} 
\dot{\z}_0\\
\dot{\z}_1\\
\dot{\z}_2
\end{bmatrix}
= 
\begin{bmatrix} 
\dot{Q}_{00} & \dot{Q}_{01} \\
\dot{Q}_{10}& \dot{Q}_{11}\\
\dot{Q}_{20} & \dot{Q}_{21}
\end{bmatrix} 
\begin{bmatrix} 
1 \\
0
\end{bmatrix} 
+ 
\begin{bmatrix} 
1 & 0 \\
0 & I\\
0 & 0
\end{bmatrix} 
\begin{bmatrix} 
\dot{u} \\
\dot{v}
\end{bmatrix} 
=
\begin{bmatrix} 
\dot{Q}_{00} + \dot{u} \\
\dot{Q}_{10} + \dot{v} \\
\dot{Q}_{20}
\end{bmatrix} 
\end{equation}
and we obtain
$\dot{\z}_2 = \dot{Q}_{20}$. 

The fact $z(t) \in L(t)\cap Z$ can be expressed as $f(\z(t))=0$.
Taking the derivative at $t=0$
(and recalling $\partial_T f(\z) = 0$) 
implies 
$\partial_X f(\z)\, \dot{\z}_1 + \partial_Y f(\z)\, \dot{\z}_2 = 0$, 
hence (recall $N:=-(\partial_X f(\z))^{-1}\partial_Y f(\z)$) 
$$
\dot{\z}_1 = N \dot{\z}_2 = N\dot{Q}_{20} .
$$

The derivative of the canonical map 
$p\colon\Sph\to\proj^n$ at $\z$ is  
the orthogonal projection 
$T_\z\Sph\to T_z\proj^n$ 
along $\R i\z$; see \cite[Lemma~14.9]{condition}.
Therefore, the derivative $\dot{z}$ 
is obtained from 
$\dot{\z}$ 
by applying the orthogonal projection along $\R iz$.
Note that $\dot{\z}_0 \in i\R$ and $\dot{z}_0 =0$. 
We obtain 
\begin{equation}
\dot{z} = 
\begin{bmatrix}
0  \\
\dot{z}_1\\
\dot{z}_2 
\end{bmatrix}
= 
\begin{bmatrix}
0  \\
N \dot{Q}_{20}\\
\dot{Q}_{20} 
\end{bmatrix} .
\end{equation}
Equation~\eqref{eq:char-cond-lifted} tells us that 
the condition number $\kappa_Z(L,z)$ equals 
the maximum of $\|\dot{z}\|$, taken over all 
$\dot{Q}\in T_I\ug$ of norm at most one. 
This norm condition amounts to
$\|\dot{Q}_{20}\| \le 1$; cf.~\eqref{eq:ug-can-metric}.
Therefore, $\kappa_Z(L,z) = \sqrt{\|N\|^2 +1}$, which 
completes the proof of Theorem~\ref{th:kappa-alpha}. 
\end{proof} 
 
\subsection{Proof of the Condition Number Theorem} 
\label{se:pf_CNT}

We first provide the proof of an auxiliary result that was stated in the introduction. 

\begin{proof}[Proof of Lemma~\ref{le:Sigma}]
Let $\Reg(Z)$ denote the set of smooth points of $Z$.
We denote by $\Sigma'$ the Zariski closure of 
$\Sigma'_o :=\{(z,L) \in \Reg(Z)\ti\Grass\mid L\in\Sigma_z(Z)\}$
in $Z\times\Grass$. 
The image of $\Sigma'$ under the projection 
$\pi_2\colon Z\ti\Grass \to \Grass,\, (\z,\tilde{L})\mapsto \tilde{L}$
is closed and, by Definition~\ref{def:hurwitz}, 
equals the Hurwitz variety $\Sigma(Z)$. 

Let now $L\in\Sigma(Z)\setminus\Irrel(Z)$. Then there exists $z\in Z$ such that 
$(z,L) \in\Sigma'$. Since $L\not\in\Irrel(Z)$, we have $z\in\Reg(Z)$.
The usual vanishing of minors condition implies that 
$\Sigma'_o$ is a Zariski closed subset of $\Reg(Z)\ti\Grass$. 
With $z\in\Reg(Z)$ this implies that $(z,L)\in\Sigma'_o$ 
and we see that indeed $L\in\Sigma_z(Z)$. 
\end{proof}

We continue with a general reasoning. 
Let $E$ be a finite dimensional hermitian vector space and 
consider the Grassmann manifold $\Grass(k,E)$ of $k$-dimensional 
linear subspaces of $E$. For a fixed linear subspace $T\subseteq E$ 
we consider the {\em Schubert variety}
\begin{equation}\label{eq:def-Sigma-T}
S_T := \big\{ W\in\Grass(k,E) \mid W\cap T\ne 0 \big\} .
\end{equation}
$S_T$ is a closed subset of $\Grass(k,E)$ since 
$W\cap T\ne 0$ is equivalent to 
$\dim (W+T) < \dim W + \dim T$,
which can be expressed by the vanishing of minors.

We write $d_p(W,S_T) := \min \{d_p(W,W') \mid W' \in S_T\}$ 
for the minimal projection distance between $W$ and $S_T$.
Similarly, we define the minimal geodesic distance $d_g(W,S_T)$.

\begin{proposition}\label{pro:char-min-angle}
For any $W\in\Grass(k,E)$ we have 
$$
  d_p(W,S_T) = \sin d_g(W,S_T) = \sin\ang_{\min}(W,T) .
$$
\end{proposition}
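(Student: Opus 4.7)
The plan is as follows. If $W\cap T\neq 0$, then $W\in S_T$ and $\ang_{\min}(W,T)=0$ by~\eqref{eq:def-angle}, so all three quantities vanish and the proposition is trivial. I may therefore assume $W\cap T=0$ and set $\alpha:=\ang_{\min}(W,T)>0$. Let $\theta_1\le\cdots\le\theta_r$, with $r=\min(k,\dim T)$, denote the principal angles between $W$ and $T$, with associated principal vectors $w_1,\ldots,w_r\in W$ and $t_1,\ldots,t_r\in T$ satisfying $\langle w_i,t_j\rangle=\cos\theta_i\,\delta_{ij}$; by~\eqref{eq:def-angle} one has $\alpha=\theta_1$. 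I extend $w_1,\ldots,w_r$ to an orthonormal basis $w_1,\ldots,w_k$ of $W$. The SVD underlying Section~\ref{se:principal-angles} ensures that $t_1$ is orthogonal to $w_2,\ldots,w_k$ (the basis vectors with index beyond $r$ are in fact orthogonal to all of $T$).

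For the upper bounds I set $W':=\spann(t_1,w_2,\ldots,w_k)$. The orthogonality just noted gives $\dim W'=k$, and $t_1\in W'\cap T$ shows $W'\in S_T$. The subspace $\spann(w_2,\ldots,w_k)$ lies in both $W$ and $W'$, and its orthogonal complements in $W$ and $W'$ are $\C w_1$ and $\C t_1$, respectively. Lemma~\ref{le:pr-ang-red} then yields the principal angles $0,\ldots,0,\alpha$ between $W$ and $W'$, whence $d_p(W,W')=\sin\alpha$ and $d_g(W,W')=\alpha$ by the formulas stated in Section~\ref{se:geo-char}.

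For the matching lower bounds, let $W'\in S_T$ be arbitrary and pick $v\in (W'\cap T)\setminus\{0\}$. For every nonzero $w\in W$ the definition of $\alpha$ gives $\ang(v,w)\ge\alpha$, so the angle between the line $\C v\subseteq W'$ and the subspace $W$ is at least $\alpha$. I then invoke the characterization of the largest principal angle $\theta_{\max}$ between $W$ and $W'$ as $\theta_{\max}=\max_{v'\in W'\setminus\{0\}}\ang(\C v',W)$, which follows by expanding a unit vector $v'\in W'$ in principal vectors and computing $\|\Pi_W v'\|^2=\sum_i|c_i|^2\cos^2\theta_i$. This gives $\theta_{\max}\ge\alpha$, and hence $d_p(W,W')=\sin\theta_{\max}\ge\sin\alpha$ and $d_g(W,W')\ge\theta_{\max}\ge\alpha$, using $\alpha,\theta_{\max}\in[0,\pi/2]$.

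Combining the two bounds gives $d_p(W,S_T)=\sin\alpha$ and $d_g(W,S_T)=\alpha$, which is exactly the stated equality $d_p(W,S_T)=\sin d_g(W,S_T)=\sin\ang_{\min}(W,T)$. There is no serious obstacle; the main technical ingredients are the two consequences of the SVD recalled in Section~\ref{se:principal-angles}, namely the block orthogonality $\langle w_i,t_j\rangle=\cos\theta_i\delta_{ij}$ used to validate the choice of $W'$ in the upper bound, and the variational characterization of $\theta_{\max}$ used in the lower bound.
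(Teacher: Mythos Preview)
Your argument is correct. The upper bound via the explicit competitor $W'=\spann(t_1,w_2,\ldots,w_k)$ works as stated: the SVD orthogonality $\langle w_i,t_1\rangle=0$ for $i\ge 2$ guarantees $\dim W'=k$, and since $\alpha>0$ forces $t_1\notin\C w_1$ one has $W\cap W'=\spann(w_2,\ldots,w_k)$ exactly, so Lemma~\ref{le:pr-ang-red} applies. The lower bound via $\theta_{\max}(W,W')=\max_{v'\in W'\setminus\{0\}}\ang(\C v',W)$ is also sound; your computation $\|\Pi_W v'\|^2=\sum_i|c_i|^2\cos^2\theta_i$ indeed gives $\ang(\C v',W)\le\theta_{\max}$ with equality at the last principal vector.

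Your route differs from the paper's. The paper does not argue directly for general~$T$; instead it observes that $S_T=\bigcup_{t\in T\setminus\{0\}}S_{\C t}$, so both $d_p(W,S_T)$, $d_g(W,S_T)$ and $\ang_{\min}(W,T)$ are minima over lines $\C t\subseteq T$, reducing the statement to the case $\dim T=1$. That one-dimensional case is then delegated to \cite[Lemma~3.2]{amel-pbuerg:11a} (proved there for real $E$, with the remark that the proof carries over to the hermitian setting). Your proof is thus more self-contained: it avoids the external citation and handles arbitrary $\dim T$ in one pass using the principal-angle machinery already set up in Section~\ref{se:principal-angles}. The paper's reduction, on the other hand, is conceptually cleaner in that it isolates the only nontrivial case ($T$ a line), where your competitor~$W'$ and your variational lower bound both become essentially one-line observations.
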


\begin{proof}
It suffices to prove the assertion in the case where $T$ is one-dimensional.
In the analogous situation of a euclidean vector space $E$, this was shown in
\cite[Lemma 3.2]{amel-pbuerg:11a}. 
It is straightforward to check that the proof given there extends to the situation of a hermitian vector space $E$.
\end{proof}

\begin{proof}[Proof of Theorem~\ref{th:CNT}]
We return to the setting of Theorem~\ref{th:CNT} and will apply 
Proposition~\ref{pro:char-min-angle} to the hermitian vector space $E:=T_z\proj^n$ and 
its subspace $T:=T_zZ$. 
Recall that $\Grass_z$ denotes the set of $L\in\Grass(s,\proj^n)$ passing through $z$. 
We have the bijection  
\begin{equation}\label{eq:G-bij}
  \Grass_z \longrightarrow \Grass(s,T_z\proj^n),\, L \mapsto T_z L := \widehat{L}/\C z
\end{equation}
If $L_1,L_2 \in \Grass_z$, then the list of principal angles between $T_z L_1$ and $T_z L_2$ is obtained 
from the list of principal angles between $L_1$ and $L_2$ by removing a zero; 
cf.~Lemma~\ref{le:pr-ang-red}. 
It follows that the bijection~\eqref{eq:G-bij} preserves the projective distance $d_p$ as 
well as the geodesic distance $d_g$. 

By its definition, the local Schubert variety $\Sigma_z(Z)$ of $Z$ at $z$
is mapped to $S_T$ under the bijection \eqref{eq:G-bij}. 
Since this is an isometry, 
we obtain with Proposition~\ref{pro:char-min-angle} that 
$d_p(L,\Sigma_z) = d_p(T_z L, S_T) = \sin\a$, 
where $\a:= \ang_{\min}(T_z L,T_z Z)$.
Similarly, $d_g(L,\Sigma_z) = d_g (T_z L, S_T) = \a$.  
Finally, Theorem~\ref{th:kappa-alpha} tells us that 
$1/\kappa_Z(L,z) = \sin \a$.
This completes the proof of Theorem~\ref{th:CNT}. 
\end{proof}
 
\section{Expressing volume in terms of degree}
\label{se:vol-HS}

We provide here the proof of Theorem~\ref{th:HlinC}. 
Recall from~\eqref{eq:plucker} the Pl\"ucker embeddding
$\iota\colon\Grass \hookrightarrow \proj(\Lambda^{s+1}\C^{n+1})$.
We have defined a Riemannian metric on $\Grass$ in Section~\ref{se:intrinsic}.  
Moreover, on the projective space $\proj(\Lambda^{s+1}\C^{n+1})$, 
we have the Riemannian metric resulting from the Fubini-Study metric, cf.~\cite{shaf-2}. 
We show now that these metrics are compatible.

\begin{lemma}\label{le:isometric}
The Pl\"ucker embeddding $\iota$ is isometric. 
\end{lemma}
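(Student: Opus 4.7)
The plan is to exploit $\ug(n+1)$-equivariance to reduce the isometry check to a single basepoint, then carry out an explicit derivative computation there. The unitary group acts on $\Grass$ by isometries (Section~\ref{se:Grass-conv}) and on $\proj(\Lambda^{s+1}\C^{n+1})$ via the unitary representation $Q\mapsto\Lambda^{s+1}Q$, which preserves the Fubini-Study metric. Since $\iota$ is equivariant with respect to these actions and $\ug(n+1)$ acts transitively on $\Grass$, it suffices to prove $\|D_{W_0}\iota(\dot W)\| = \|\dot W\|$ at the basepoint $W_0 := \spann\{e_1,\ldots,e_{s+1}\}$ for arbitrary $\dot W \in T_{W_0}\Grass$.

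At $W_0$ I would parameterize tangent vectors via horizontal skew-hermitian matrices $X = \begin{bmatrix} 0 & -R^* \\ R & 0 \end{bmatrix}$ with $R\in\C^{m\times(s+1)}$; by \eqref{eq:def-hip} the corresponding tangent vector has squared norm $\|R\|_F^2$. Take $Q(t) := \exp(tX)$. Since $Q(t)$ is unitary, the columns $Q(t)e_1,\ldots,Q(t)e_{s+1}$ form a moving orthonormal basis of $W(t) := \pi(Q(t))$, so $\omega(t) := Q(t)e_1\wedge\cdots\wedge Q(t)e_{s+1}$ is a smooth unit-vector lift of $\iota(W(t))$ to $\Lambda^{s+1}\C^{n+1}$, with $\omega(0) = \omega_0 := e_1\wedge\cdots\wedge e_{s+1}$.

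Differentiating and using $Xe_j = \sum_{k=1}^{m} R_{kj}\, e_{s+1+k}$ for $1 \le j \le s+1$ yields
$$
\dot\omega \;=\; \sum_{j=1}^{s+1} e_1\wedge\cdots\wedge Xe_j\wedge\cdots\wedge e_{s+1}
\;=\; \sum_{j=1}^{s+1}\sum_{k=1}^{m} R_{kj}\,\xi_{j,k},
$$
where $\xi_{j,k}$ denotes the multivector obtained from $\omega_0$ by replacing $e_j$ with $e_{s+1+k}$. The $\xi_{j,k}$ are (up to sign) distinct standard basis elements of $\Lambda^{s+1}\C^{n+1}$, hence pairwise orthonormal; moreover each involves the basis vector $e_{s+1+k}\notin\{e_1,\ldots,e_{s+1}\}$, so $\xi_{j,k}\perp\omega_0$ and therefore $\dot\omega\perp\omega_0$. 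This last orthogonality (in the \emph{complex} hermitian sense) means $\dot\omega$ is already a Fubini-Study tangent vector at $[\omega_0]$ with no projection correction needed, giving Fubini-Study squared norm $\sum_{j,k}|R_{kj}|^2 = \|R\|_F^2 = \|\dot W\|^2$.

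The argument is essentially a direct unwinding of definitions. The step requiring the most care is verifying that $\dot\omega$ lies in the complex orthogonal complement $\{\eta\mid\langle\eta,\omega_0\rangle=0\}$, rather than merely the real orthogonal complement forced by $\|\omega(t)\|\equiv 1$; here it is the structural fact that $X$ is horizontal (the diagonal blocks vanish) which makes each $Xe_j$ land in $\spann\{e_{s+2},\ldots,e_{n+1}\}$, from which the needed orthogonality follows.
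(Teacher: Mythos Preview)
Your proof is correct and follows essentially the same approach as the paper: both reduce via $\ug(n+1)$-equivariance to the basepoint $W_0=\spann\{e_1,\ldots,e_{s+1}\}$, compute the derivative on the horizontal space parameterized by $R\in\C^{m\times(s+1)}$, and observe that the result has Pl\"ucker components $r_{kj}$ sitting in pairwise orthonormal slots orthogonal to $\omega_0$. The only difference is notational---you work with wedge products directly, whereas the paper phrases the same computation in terms of the $(s+1)$-minors of the first $s+1$ columns of $Q$---and your explicit remark about complex (rather than merely real) orthogonality to $\omega_0$ is a nice point that the paper leaves implicit.
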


\begin{proof} 
The composition of $\iota$ with the map~$\pi\colon\ug(n+1)\to\Grass$ from~\eqref{eq:pi-map}
is described by the map 
$\varphi\colon\ug(n+1) \to \Lambda^{s+1}\C^{n+1}, Q\mapsto y$, 
that maps a matrix $Q$ to the list $(y_I)$ of the $(s+1)$-minors of the submatrix of $Q$ 
consisting of its first $s+1$ columns. 
The unit matrix $I$ is mapped to the vector $e_0=(1,0,\ldots,0) \in \Lambda^{s+1}\C^{n+1}$. 
By unitary invariance, it is sufficient to prove that $\varphi$ 
is a Riemannian submersion at $I$. The derivative 
$D_I\varphi\colon T_I\ug(n+1) \to \Lambda^{s+1}\C^{n+1}$,
restricted to the space of 
skew-hermitian matrices of the form~\eqref{eq:TS-Gr}, is given by 
$$
\dot{Q} =\begin{bmatrix}
0 & -R^* \\
R & 0
\end{bmatrix}
\mapsto (\dot{y}_I), \quad 
 \dot{y}_I = \left\{
    \begin{array}{ll} r_{ij} & \mbox{ if $I= \{1,\ldots,s+1\} \setminus \{j\} \cup \{s+i\}$}\\
                           0      &  \mbox{ otherwise.}
    \end{array} \right.  ,           
$$
where $R=(r_{ij}) \in \C^{m\times (s+1)}$. 
This map is isometric according to our definition~\eqref{eq:def-hip} 
of the inner product on $T_I \Grass$ and the definition of the 
Fubini-Study metric on $T_{e_0}\proj^n$.
\end{proof}

We recall a well known and fundamental result in complex algebraic geometry; 
see \cite[\S5.C]{mumford} or \cite[Chap.~VIII, \S4.4]{shaf-2}. 

\begin{theorem}\label{th:degvol}
An irreducible complex projective variety~$Z$ of dimension~$n$ satisfies
$\vol(Z) = \deg Z \cdot \vol(\proj^n)$. \hfill\qed
\end{theorem}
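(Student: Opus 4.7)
The plan is to identify the Riemannian volume of $Z$ with the integral of a top power of the Fubini--Study K\"ahler form, and then recognize this integral as $\pi^n$ times the intersection-theoretic degree of $Z$. Throughout, let $Z \subseteq \proj^N$ sit in some ambient projective space, and let $\omega$ denote the Fubini--Study K\"ahler form on $\proj^N$, normalized consistently with the Fubini--Study metric used in the paper (so that $\int_{\proj^1}\omega = \pi$).

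First, I would invoke Wirtinger's theorem: for a complex submanifold $M$ of complex dimension $n$ in a K\"ahler manifold with K\"ahler form $\omega$, the Riemannian volume is $\vol(M) = \tfrac{1}{n!}\int_M \omega^n$. This is the content of a pointwise linear-algebra identity at each tangent space, combined with integration. I would apply this to the smooth locus $\Reg(Z)$, noting that $\Sing(Z)$ is a proper algebraic subvariety of $Z$ and hence has real codimension at least two in $Z$, so it is a set of Lebesgue measure zero with respect to the Riemannian volume on $\Reg(Z)$; consequently $\vol(Z) = \vol(\Reg(Z)) = \tfrac{1}{n!}\int_{\Reg(Z)} \omega^n$.

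Second, I would match $\int_{\Reg(Z)}\omega^n$ with the degree. With the above normalization, $[\omega] \in H^2(\proj^N,\R)$ equals $\pi$ times the Poincar\'e dual of a hyperplane class $H$. Integration of smooth forms along $\Reg(Z)$ defines a closed current of dimension $2n$ representing a homology class, which (by a standard argument, e.g.\ via resolution of singularities or via the Lelong--Poincar\'e formula) equals $\deg(Z)$ times the class of an $n$-dimensional linear subspace. Therefore $\int_{\Reg(Z)} \omega^n = \pi^n \cdot \deg(Z)$. Specializing to $Z = \proj^n$ (where $\deg Z = 1$) gives the base case $\vol(\proj^n) = \pi^n/n!$, and dividing yields $\vol(Z) = \deg(Z)\cdot \vol(\proj^n)$.

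The main obstacle is the treatment of $\Sing(Z)$: one must justify that the integral $\int_{\Reg(Z)} \omega^n$ is finite and that it computes the same cohomological pairing as in the smooth case. This is handled by the theory of positive closed currents on K\"ahler manifolds, or more concretely by pulling back to a resolution $\tilde Z \to Z$ that is biholomorphic over $\Reg(Z)$ and applying Wirtinger on the smooth projective manifold $\tilde Z$. An alternative avoiding K\"ahler geometry entirely would be a Crofton-type argument: express $\vol(Z)$ as the integral over the Grassmannian of complementary linear subspaces $L$ of $\#(Z\cap L)$, weighted by the unitary-invariant measure, and use that this count equals $\deg(Z)$ for almost every $L$ by B\'ezout. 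This route sidesteps currents but requires a careful double-fibration and Fubini argument, together with the known normalizing constant from the analogous Crofton identity on $\proj^N$ itself.
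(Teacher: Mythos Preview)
Your proposal is correct and outlines the standard proof via Wirtinger's theorem together with the cohomological interpretation of degree (with the Crofton alternative noted at the end). There is nothing to compare against, however: the paper does not supply its own proof of this statement. It is stated as a well-known fact, marked with a \qed, and attributed to \cite[\S5.C]{mumford} and \cite[Chap.~VIII, \S4.4]{shaf-2}. Your sketch is precisely the argument one finds in those references, so in that sense you have recovered what the paper delegates to the literature.
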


\begin{proof}[Proof of Theorem~\ref{th:HlinC}]
Let $\Hy$ be an algebraic hypersurface of $\Grass$ 
and put $N := \dim\Grass$. 
Lemma~\ref{le:isometric} gives $\vol(\Hy) = \vol(\iota(\Hy))$. 
Applying Theorem~\ref{th:degvol} to $Z=\iota(\Hy)$, we get 
$\vol(\iota(\Hy)) = \deg\iota(\Hy) \cdot\vol(\proj^{N-1})$.  
Using~\eqref{eq:rdeg}, we obtain 
$$
 \vol(\Hy) = \rdeg\Hy \cdot \deg\iota(\Grass)\cdot\vol(\proj^{N-1}) .
$$  
In particular, $\vol(\Hyl) = \deg\iota(\Grass) \cdot\vol(\proj^{N-1})$
and the first assertion follows.

For the second assertion, we apply 
Lemma~\ref{le:isometric} and Theorem~\ref{th:degvol} to 
$Z=\iota(\Grass)$ and get
$\vol(\Grass) = \vol(\iota(\Grass)) = \deg\iota(\Grass) \cdot\vol(\proj^N)$. 
This implies 
$\vol(\Hyl)/\vol(\Grass) = \vol(\proj^{N-1})/\vol(\proj^N) = N/\pi$.
\end{proof}


\end{document}